\documentclass[11pt]{article}
\usepackage{amscd}
\usepackage{amsfonts}
\usepackage{amsmath}
\usepackage{amssymb}
\usepackage{amsthm}
\usepackage{bbm}
\usepackage{CJK}
\usepackage{fancyhdr}
\usepackage{graphicx}
\usepackage{indentfirst}
\usepackage{latexsym,bm}
\usepackage{mathrsfs}
\usepackage{xypic}
\usepackage[square, comma, sort&compress, numbers]{natbib}
\usepackage[colorlinks,linkcolor=red, anchorcolor=blue]{hyperref}
\allowdisplaybreaks[4]
\newtheorem{theorem}{Theorem}[section]
\newtheorem{lemma}[theorem]{Lemma}
\newtheorem{definition}[theorem]{Definition}
\newtheorem{proposition}[theorem]{Proposition}
\newtheorem{example}[theorem]{Example}

\newtheorem{remark}[theorem]{Remark}

\usepackage[top=1in,bottom=1in,left=1.25in,right=1.25in]{geometry}
\def\<{\langle}
\def\>{\rangle}
\def\a{\alpha}
\def\B{\Bbbk}
\def\b{\beta}

\def\c{\cdot}

\def\D{\Delta}

\def\lr{\longrightarrow}

\def\m{\mapsto}
\def\n{\natural}
\def\o{\otimes}
\def\om{\omega}

\def\r{\rho}
\def\ra{\rightarrow}

\def\tr{\triangleright}
\def\tl{\triangleleft}

\def\v{\varepsilon}

\date{}
\begin{document}
\renewcommand{\baselinestretch}{1.2}
\renewcommand{\arraystretch}{1.0}
\title{\bf BiHom-L-R-smash biproduct and BiHom-Yetter-Drinfel'd-Long category}
 \date{}
\author {{\bf Dongdong Yan$^{a}$\footnote {Corresponding author:  Ydd150365@163.com}  \quad Xiaoqian Liu $^{b}$ }\\
{\small a: School of Mathematics and Physics, Nanjing Institute of Technology, Nanjing, }\\
{\small  Jiangsu 211167, P. R. of China}\\
{\small b: School of Information Engineering, Nanjing Xiaozhuang University, Nanjing }\\
{\small  Jiangsu 211171, P. R. of China}
} \maketitle
\begin{center}
\begin{minipage}{12.cm}

\noindent{\bf Abstract.} In this article, we first introduce the notion of BiHom-L-R-$\binom{m,n,p,q}{s,t,u,v}$-smash biproduct over a BiHom-Hopf algebra, denoted by $D\natural H$, where $m,n,p,q,s,t,u,v\in \mathbb{Z}$, and give the sufficient condition for $D\natural H$ to be a BiHom-bialgebra. Furthermore, we describe the concept of BiHom-$\binom{m,n,p,q}{s,t,u,v}$-Yetter-Drinfel'd-Long bimodule via BiHom-L-R-$\binom{m,n,p,q}{s,t,u,v}$-smash biproduct bialgebra, and prove that the category $\mathcal{LR}(H)(m,n,p,q)$ of BiHom-$\binom{m,n,p,q}{s,t,u,v}$-Yetter-Drinfel'd-Long bimodule is a strict braided monoidal category. Finally, for a finite-dimensional BiHom-Hopf algebra H, \(\mathcal{LR}(H)\binom{m,n,p,q}{s,t,u,v}\) is isomorphic to the BiHom-$\binom{s,t}{p,q}$-Yetter-Drinfel'd category \({}_{H\otimes H^*}^{H\otimes H^*}\mathcal{YD}\binom{s,t}{p,q}\) as braided monoidal categories.
\\

\noindent{\bf Keywords:} BiHom-Hopf algebra; BiHom-L-R-smash biproduct; BiHom-Yetter-Drinfel'd-Long bimodule; Braided monoidal category
\\

 \noindent{\bf  Mathematics Subject Classification:} 16T05, 16T10, 18D10
 \end{minipage}
 \end{center}
 \normalsize\vskip1cm

\section{Introduction}
\def\theequation{1.\arabic{equation}}
\setcounter{equation} {0}
It is well known that the Radford biproduct admits a categorical interpretation originating from Majid: a pair \((H,A)\) is admissible (see \cite{R85}) if and only if A is a bialgebra in the Yetter–Drinfel’d category \({}_H^H\mathcal{YD}\). For L-R-admissible pairs, Panaite and Van Oystaeyen \cite{PV10} established an analogous categorical picture and introduced a prebraided monoidal category \(\mathcal{LR}(H)\) (which becomes braided whenever H possesses a bijective antipode). They further proved that \((H,D)\) forms an L-R-admissible pair satisfying the additional compatibility condition
\[b_{(0)}\triangleleft b'_{[-1]}\otimes b_{(1)}\triangleright b'_{[0]}=b\otimes b',\quad \forall\,b,b'\in D,\]
precisely when D is a bialgebra within \(\mathcal{LR}(H)\). Here the L-R-admissible condition guarantees that \(D\otimes H\) carries a bialgebra structure via the L-R-smash product and L-R-smash coproduct; this bialgebra is written as \(D\n H\) and referred to as the L-R-smash biproduct, which contains the classical Radford biproduct as a special instance. In the setting where H is a finite-dimensional bialgebra, Lu and Wang \cite{LW17} showed that \(\mathcal{LR}(H)\) and \({}_{H\otimes H^*}^{H\otimes H^*}\mathcal{YD}\) are isomorphic monoidal categories (and these categories are braided provided that H is a Hopf algebra). Subsequently, Lu and Zhang \cite{LZ18} extended the above results to the framework of Hom-Hopf algebras.

Hom-algebras were first introduced by Makhlouf and Silvestrov in \cite{MS08}. In this framework, associativity is replaced by Hom-associativity: \(\alpha(a)bc = (ab)\alpha(c)\). Hom-coassociativity for Hom-coalgebras may be defined analogously (see \cite{MS10}). The definitions of Hom-bialgebras and Hom-Hopf algebras have also been proposed and systematically developed, see \cite{FK20, G10, LM14, LS14,  MP14, MP15}. From the perspective of monoidal categories, Caenepeel and Goyvaerts investigated Hom-structures in \cite{CG11} and introduced monoidal Hom-algebras, monoidal Hom-coalgebras and related objects within symmetric monoidal categories. These notions differ slightly from the aforementioned Hom-algebras and Hom-coalgebras. In \cite{MLC20}, Ma, Liu and Chen defined the \((m+n)\)-Yetter–Drinfel’d category \({}_H^H\mathcal{YD}(m,n)\) with \(m,n\in\mathbb{Z}\) via Radford \((m,n)\)-biproduct monoidal Hom-Hopf algebras, and proved that \({}_H^H\mathcal{YD}(m,n)\) is a braided monoidal category. In \cite{Y26}, Yan described L-R-$(m, n, p, q)$-smash biproduct and
$(m, n, p, q)$-Yetter–Drinfel’d–Long category with \(m,n,p,q\in\mathbb{Z}\). For further relevant work, we refer the reader to \cite{GW24,MLC18,MLC20}.

BiHom-(co)algebras and BiHom-bialgebras were investigated by Graziani et al. in \cite{GMMP15}. This framework provides a more general definition. Precisely, a BiHom-bialgebra is a generalized bialgebra for which the associativity and unit conditions are twisted by two automorphisms $\alpha$ and $\beta$, while the coassociativity and counit conditions are twisted by another two automorphisms $\varphi$ and $\psi$. It reduces to an ordinary Hom-bialgebra when $\alpha = \beta = \varphi = \psi$, and to a monoidal Hom-bialgebra when $\alpha^{-1} = \beta^{-1} = \varphi = \psi$. In \cite{ZWC24}, Zhang et al. discuss BiHom-type modules, Yetter-Dinfeld modules and Drinfeld doubles with parameters. Further work on BiHom-type algebras can be found in \cite{FL18,GZW18,LMMP20,ZW20} among others. 

Motivated by these constructions, it is natural to ask whether we can introduce the notion of BiHom-Yetter–Drinfel’d–Long bimodules via parameterized BiHom-L-R-smash biproduct Hopf algebras?

This paper is organized as follows:

In Section 2, we recall basic notions and constructions of BiHom-Hopf algebra. In Section 3, we first introduce the notions of BiHom-L-R-$(m, n,p,q)$-smash product algebra $D\n_{m, n,p,q}H$ and  BiHom-L-R-$(s,t,u,v)$-smash coproduct coalgebra $D\n^{s,t,u,v}H$, where $m,n,p,q,s,t,u,v\in \mathbb{Z}$. Then the sufficient condition for $D\n_{m, n,p,q}H$ and $D\n^{s,t,u,v}H$ on $D\o H$ to be a BiHom-bialgebra are derived (called BiHom-L-R-$\binom{m,n,p,q}{s,t,u,v}$-smash biproduct, denoted by $D\n H$), generalizing the Radford $(m, n)$-biproduct in \cite{MLC20} and L-R-$(m, n,p,q)$-smash biproduct in \cite{Y26}. In Section 4, we introduce the concept of BiHom-$\binom{m,n,p,q}{s,t,u,v}$-Yetter-Drinfel'd-Long bimodule, and prove the category $\mathcal{LR}(H)\binom{m,n,p,q}{s,t,u,v}$ of BiHom-$\binom{m,n,p,q}{s,t,u,v}$-Yetter-Drinfel'd-Long bimodules is a strict braided monoidal category. Moreover, we obtain that $D\n H$ is a BiHom-L-R-$\binom{m,n,p,q}{s,t,u,v}$-smash biproduct bialgebra if and only if $D$ is a BiHom-bialgebra in $\mathcal{LR}(H)\binom{m,n,p,q}{s,t,u,v}$ with an extra condition. In Section 5, let $H$ be a finite dimensional BiHom-Hopf algebra, we prove $\mathcal{LR}(H)\binom{m,n,p,q}{s,t,u,v}$ is  isomorphic to the category $\!^{H\o H^{*}}_{H\o H^{*}}\mathcal{YD}\binom{s,t}{p,q}$ of Yetter-Drinfel'd modules as braided monoidal categories.

\section{Preliminaries}
\def\theequation{2.\arabic{equation}}
\setcounter{equation} {0}
Throughout this paper, all algebraic systems are over a field $\B$.  We denote the identity map by $\mathrm{id}$. We shall use the sigma notation in the versions of Sweedler
for $\D:\D(h)=h_{1}\o h_{2}$. In order to facilitate our computations, we always omit the summation symbol $\sum$.

A monoidal category $\mathcal{C}=(\mathcal{C},\otimes,\mathcal{I},a,l,r)$ is a category $\mathcal{C}$ equipped with a tensor product functor $\otimes:\mathcal{C}\times\mathcal{C}\longrightarrow\mathcal{C}$, with a tensor unit object $\mathcal{I}\in\mathcal{C}$, with an associativity constraint isomorphism $a=a_{U,V,W}:(U\otimes V)\otimes W\longrightarrow U\otimes(V\otimes W)$ for any objects $U,V,W\in\mathcal{C}$, a left unit constraint $l=l_U:\mathcal{I}\otimes U\longrightarrow U$ and a right unit constraint $r=r_U:U\otimes\mathcal{I}\longrightarrow U$, for any object $U\in\mathcal{C}$, such that the pentagon axiom $a_{U,V,W\otimes X}\circ a_{U\otimes V,W,X}=(U\otimes a_{V,W,X})\circ a_{U,V\otimes W,X}\circ(a_{U,V,W}\otimes X)$ and the triangle axiom $(U\otimes l_V)\circ a_{U,\mathcal{I},V}=(r_U\otimes V)$ hold, for any objects $U,V,W,X\in\mathcal{C}$. A monoidal category $\mathcal{C}$ is strict if all the constraints are identities. 

A \emph{braiding}\cite{K95} of a monoidal category $\mathcal{C}$ is a family of natural isomorphisms $c=c_{V,W}:V\otimes W\longrightarrow W\otimes V$ such that the following conditions hold
\begin{align*}
\begin{cases}
&c_{U,V\otimes W}=a_{V,W,U}^{-1}\circ(V\otimes c_{U,W})\circ a_{V,U,W}\circ(c_{U,V}\otimes W)\circ a_{U,V,W}^{-1},\\
&c_{U\otimes V,W}=a_{W,U,V}\circ(c_{U,W}\otimes V)\circ a_{U,W,V}^{-1}\circ(U\otimes c_{V,W})\circ a_{U,V,W},
\end{cases}
\end{align*}
for any $U,V,W\in\mathcal{C}$, where $a$ is the associativity constraint in the monoidal category $\mathcal{C}$.

Note that a braided monoidal category is a monoidal category $\mathcal{C}$ with a braiding.

In what follows, we will recall from \cite{GMMP15}, \cite{LWL23}, \cite{ZWC24}  some information concerning BiHom-structures.

A \emph{unital BiHom-associative algebra $A$} is a 5-tuple $(A, \mu_A, 1_A, \alpha_A, \beta_A)$, in which $A$ is a linear space, $1_A \in A$ is an element (the unit), $\alpha_A, \beta_A: A \to A$ are linear isomorphisms, $\mu_A: A \otimes A \to A$ is a linear map with the notation $\mu_{A}(a \otimes b) = ab$, such that, for all $a,b,c \in A$:
\begin{align*}
\begin{cases}
&\alpha_A(1_A) = \beta_A(1_A) = 1_A,\quad a1_A = \alpha_A(a),\quad 1_Aa = \beta_A(a),\\
& \alpha_A(a)(bc) = (ab)\beta_A(c),\\
&\alpha_A \circ \beta_A = \beta_A \circ \alpha_A,\quad \alpha_A(ab) = \alpha_A(a)\alpha_A(b),\quad \beta_A(ab) = \beta_A(a)\beta_A(b).
\end{cases}
\end{align*}

In this paper, the algebras we mainly discussed are this kind of unital BiHom-associative algebras, and in the following we call them the \emph{BiHom-algebras}.

A $\emph{BiHom-algebra map}$ $f:(A, \mu_A, 1_A, \alpha_A, \beta_A)\lr (A', \mu_{A'}, 1_{A'}, \alpha_{A'}, \beta_{A'})$ is a map $f:A\lr A'$ such that $\a_{A'}\circ f=f\circ \a_{A}$, $\b_{A'}\circ f=f\circ \b_{A}$, $f(ab)=f(a)f(b)$ and $f(1_{A})=1_{A'}$,  for any $a,b\in A$.

A  \emph{counital BiHom-coassociative coalgebra $C$} is a 5-tuple $(C, \Delta_C, \varepsilon_C, \om_C, \psi_C)$, in which $C$ is a linear space, $\om_C, \psi_C: C \to C$ are linear isomorphisms, $\varepsilon_C: C \to \B$ and $\Delta_C: C \to C \otimes C$ are linear maps, such that, for all $c\in C$:
\begin{align*}
\begin{cases}
&\varepsilon_C(\om_C(c)) = \varepsilon_C(\psi_C(c)) = \varepsilon_C(c),\quad c_1\varepsilon_C(c_2) = \om_C(c),\quad \varepsilon_C(c_1)c_2 = \psi_C(c),\\
& \om_C(c_1) \otimes \Delta_C(c_2) = \Delta_C(c_1) \otimes \psi_C(c_2),\\
&\om_C \circ \psi_C = \psi_C \circ \om_C,\quad \Delta_C(\om_C(c)) = \om_C(c_1) \otimes \om_C(c_2),\quad \Delta_C(\psi_C(c)) = \psi_C(c_1) \otimes \psi_C(c_2).
\end{cases}
\end{align*}

Analogue to BiHom-algebras, \emph{BiHom-coalgebras} will be short for counital BiHom-coassociative coalgebra without any confusion.

A \emph{BiHom-coalgebras map $f:(C, \Delta_C, \varepsilon_C, \om_C, \psi_C)\lr (C', \Delta_{C'}, \varepsilon_{C'}, \om_{C'}, \psi_{C'})$} is a map $f:C\lr C'$ such that $\om_{C'}\circ f=f\circ \om_{C}$, $\psi_{C'}\circ f=f\circ \psi_{C}$, $\D_{C'}\circ f=(f\o f)\circ \D_{C}$ and $\varepsilon_{C'}\circ f=\varepsilon_{C}$.

A \emph{BiHom-bialgebra $H$} is a 9-tuple $(H,\mu,1,\Delta,\varepsilon,\a,\b,\om,\psi)$ with the property that $(H,\mu,1,\a,\b)$ is a BiHom-algebra, $(H,\Delta,\varepsilon,\om,\psi)$ is a BiHom-coalgebra, and $\Delta,\varepsilon$ are all morphisms of BiHom-algebras preserving unit, i.e., for all $h,g\in H$,
$$
\Delta(hg)=h_1g_1\otimes h_2g_2,\quad \v(hg)=\v(h)\v(g),\quad \Delta(1)=1\otimes 1,\quad \v(1)=1_\B.
$$
Moreover, $\a,\b$ are BiHom-coalgebra maps, $\om,\psi$ are BiHom-algebra maps, and they commute with each other.

\begin{example}
If $H = (H,\mu,1,\Delta,\varepsilon,\a,\b,\om,\psi)$ is a finite dimensional BiHom-bialgebra, $H^* = \mathrm{hom}(H, \B)$. Define the multiplication $\star$, the comultiplication $\Delta_{H^*}$ (with the notation $\Delta_{H^*}(p) = p_1 \otimes p_2$) and $\varepsilon_{H^*}$ by
$$
\begin{aligned}
(p \star q)(h) &= p(\alpha^{-1}\om^{-1}(h_1))q(\beta^{-1}\psi^{-1}(h_2)),\quad \varepsilon_{H^*}(p) = p(1_H),\\
(p_1 \otimes p_2)(h \otimes g) &= p(\alpha^{-1}\om^{-1}(h)\beta^{-1}\psi^{-1}(g)),
\end{aligned}
$$
where $p,q \in H^*$, $h,g \in H$. Define $\alpha_{H^*},\beta_{H^*},\om_{H^*},\psi_{H^*}$ by
$$
\alpha_{H^*}(p) = p \circ \a^{-1},\quad \beta_{H^*}(p) = p \circ \b^{-1},\quad \om_{H^*}(p) = p \circ \om^{-1},\quad \psi_{H^*}(p) = p \circ \psi^{-1}.
$$
Then $H^* = (H^*, \star, \varepsilon, \Delta_{H^*}, \varepsilon_{H^*}, \alpha_{H^*}, \beta_{H^*}, \om_{H^*}, \psi_{H^*})$ is a BiHom-bialgebra.
\end{example}

This example is slightly different from \cite{ZWC24} Ex. 2.6.

A \emph{BiHom-Hopf algebras} \cite{ZWC24}  is a BiHom-bialgebra $H=(H,\mu,1,\Delta,\varepsilon,\a,\b,\om,\psi)$ with a morphism (called the antipode)
$S\colon H\to H$ such that $S$ commutes with $\alpha,\beta,\om,\psi$, and satisfies, for any $h\in H$,
$$
h_1S(h_2)=S(h_1)h_2=\varepsilon(h)1_H.
$$

Note the definition of BiHom-Hopf algebras which is little different from \cite{GMMP15}, Def. 6.9.

\begin{proposition}\cite{ZWC24}
If $H$ is a BiHom-Hopf algebra, then

(1) the antipode $S$ satisfies 
\begin{align*}
&S(ab) = S\alpha^{-1}\beta(b)\,S\alpha\beta^{-1}(a),\quad S(1)=1,
\\&\Delta(S(a)) = S\om\psi^{-1}(a_2)\otimes S\om^{-1}\psi(a_1),\quad \varepsilon\circ S=\varepsilon,
\\&S\alpha^2\om^2=S\beta^2\psi^2;
\end{align*}

(2) if $S$ is a bijective map, then 
\begin{align*}
&\alpha^2\om^2=\beta^2\psi^2,\\
&S^{-1}(ab) = S^{-1}\alpha^{-1}\beta(b)S^{-1}\alpha\beta^{-1}(a),\quad S^{-1}(1_H)=1_H,\\
&\Delta(S^{-1}(a)) = S^{-1}\om\psi^{-1}(a_2)\otimes S^{-1}\om^{-1}\psi(a_1),\quad \varepsilon\circ S^{-1}=\varepsilon,\\
&S^{-1}\alpha^{-2}\beta^2(a_2)a_1 = a_2S^{-1}\alpha^2\beta^{-2}(a_1)=\varepsilon(a)1_H,\\
&S^{-1}\om^{2}\psi^{-2}(a_2)a_1 = a_2S^{-1}\om^{-2}\psi^2(a_1)=\varepsilon(a)1_H.
\end{align*}

\end{proposition}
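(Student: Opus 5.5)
The plan is to derive every identity in the Proposition from the defining relations of a BiHom-Hopf algebra, namely $h_1S(h_2)=S(h_1)h_2=\v(h)1$, together with BiHom-associativity, BiHom-coassociativity and the unit and counit rules. The main device is a BiHom version of the convolution argument. Since $\a,\b$ are coalgebra maps, $\om,\psi$ are algebra maps, and all of them commute with $S$, twisted copies of $S$ behave under $\mu$ and $\D$ exactly as $S$ does.

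\textbf{Part (1).} For the unit identities we start from $\D(1)=1\o1$. This gives $1\,S(1)=\v(1)1=1$, and since $1\,S(1)=\b(S(1))=S(\b(1))=S(1)$ we get $S(1)=1$. For $\v\circ S=\v$ we apply $\v$ to $S(h_1)h_2=\v(h)1$ and use multiplicativity of $\v$ and $\v(h_1)h_2=\psi(h)$. This yields $\v(S(\psi(h)))=\v(h)$, and $\v\circ\psi=\v$ finishes it.

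For the anti-multiplicativity $S(ab)=S\a^{-1}\b(b)\,S\a\b^{-1}(a)$ we mimic the classical proof. We evaluate an element such as
\[
S\big((a_1b_1)\big)\cdot\big((a_2b_2)\,\cdot\,S(\dots b_3)S(\dots a_3)\big)
\]
in two ways. Here the twists are inserted so that BiHom-associativity $\a(x)(yz)=(xy)\b(z)$ can be applied, and BiHom-coassociativity is used to regroup $a_1\o a_2\o a_3$. Collapsing the left pair with the antipode identity gives $S(ab)$ up to twists; collapsing the right pair gives $S(b)S(a)$ up to twists. Matching the twists produces exactly the exponents $\a^{\mp1}\b^{\pm1}$. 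The comultiplication formula is proved dually, comparing $\D(S(a))$ with $S(a_2)\o S(a_1)$ twisted, and there the exponents $\om^{\pm1}\psi^{\mp1}$ appear.

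For $S\a^2\om^2=S\b^2\psi^2$ we expand $\v(h)1$ as $h_1S(h_2)$. Inserting units ($x1=\a(x)$, $1x=\b(x)$) and counits ($c_1\v(c_2)=\om(c)$, $\v(c_1)c_2=\psi(c)$) in two ways gives two expressions for the same twisted $S(h)$, which must therefore agree. I expect this step, with its careful bookkeeping of twist exponents, to be the main obstacle. The key point is that $S$ is a two-sided convolution inverse of $\mathrm{id}$ only up to twists, so uniqueness of inverses yields equality only after composing with $\a^2\om^2$ versus $\b^2\psi^2$.

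\textbf{Part (2).} If $S$ is bijective, cancelling $S$ in $S\a^2\om^2=S\b^2\psi^2$ gives $\a^2\om^2=\b^2\psi^2$. The formulas for $S^{-1}$ come from applying $S^{-1}$ to the identities of part (1) and substituting arguments, using that $S^{-1}$ commutes with the structure maps. For $S^{-1}\a^{-2}\b^2(a_2)a_1=\v(a)1$ we apply $S$ to the left side. By anti-multiplicativity this becomes $S\a^{-1}\b(a_1)\,\a\b^{-1}\a^{-2}\b^2(a_2)$ up to twists, which simplifies to a twisted form of $S(a_1)a_2$ and hence to $\v(a)1$; then we apply $S^{-1}$, using $S^{-1}(1)=1$. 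The other three identities are analogous, and the $\om,\psi$ versions follow from the relation $\a^2\om^2=\b^2\psi^2$ just established.
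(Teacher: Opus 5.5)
The paper gives no proof of this proposition; it quotes it from the cited reference, so your plan has to stand on its own. Your unit and counit identities are fine, and so is all of part (2) once part (1) is available. The central step of part (1), however, does not close the way you describe.

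\textbf{The gap: the single two-way evaluation does not give $S(ab)$.} Your evaluation is an instance of the following general fact. Suppose $f,g,k\colon C\to A$ satisfy $f(c_1)g(c_2)=\varepsilon(c)1=g(c_1)k(c_2)$. Apply BiHom-associativity and coassociativity to $\alpha\bigl(f(c_{11})\bigr)\bigl(g(c_{12})\,k(\psi(c_2))\bigr)$. This yields only
\[
\alpha^2f\omega^2=\beta^2k\psi^2 .
\]
Now take $C=H\otimes H$, $g=\mu$, $f=S\circ\mu$ and $k(a\otimes b)=S\alpha^{-1}\beta(b)\,S\alpha\beta^{-1}(a)$. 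The exponents $\alpha^{\mp1}\beta^{\pm1}$ are exactly what makes $(a_1b_1)\,k(a_2\otimes b_2)=\varepsilon(a)\varepsilon(b)1$ hold. The outcome is therefore
\[
\alpha^2\omega^2S(ab)=\beta^2\psi^2\bigl(S\alpha^{-1}\beta(b)\,S\alpha\beta^{-1}(a)\bigr),
\]
not the identity itself.

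No choice of inserted twists removes this outer discrepancy. Consider the Yau twist ($a*b=\alpha(a)\beta(b)$, $c\mapsto\omega(c_1)\otimes\psi(c_2)$) of an ordinary Hopf algebra $H_0$ with antipode $S_0$ by commuting Hopf automorphisms. There the left and right convolution inverses of $\mathrm{id}$ are $S_0\phi^{-1}$ and $S_0\phi$, with $\phi=\alpha\omega\beta^{-1}\psi^{-1}$. These differ when $S_0$ is bijective and $\phi^2\neq\mathrm{id}$.

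The same happens for $\Delta\circ S$, where you only get
\[
\Delta(S\alpha^2\omega^2(a))=(\beta^2\psi^2\otimes\beta^2\psi^2)\bigl(S\omega\psi^{-1}(a_2)\otimes S\omega^{-1}\psi(a_1)\bigr).
\]
The missing input is precisely the last identity of (1). Your plan proves it only afterwards and never feeds it back.

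\textbf{The repair: reverse the order.}
\begin{enumerate}
\item Run the same computation first with $f=k=S$ and $g=\mathrm{id}$. The expressions $\alpha S\omega(h_1)\bigl(h_{21}S(h_{22})\bigr)$ and $\bigl(S(h_{11})h_{12}\bigr)\beta S\psi(h_2)$ coincide. They equal $\alpha^2\omega^2S(h)$ and $\beta^2\psi^2S(h)$ respectively, which gives $S\alpha^2\omega^2=S\beta^2\psi^2$. This, rather than inserting units and counits, is the mechanism; your ``key point'' sentence has it right.
\item Since $S(ab)$ lies in the image of $S$, you get $\alpha^2\omega^2S(ab)=\beta^2\psi^2S(ab)$. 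Cancelling the bijection $\beta^2\psi^2$ then gives anti-multiplicativity.
\item Similarly, $\Delta(S\alpha^2\omega^2(a))=\Delta(S\beta^2\psi^2(a))=(\beta^2\psi^2\otimes\beta^2\psi^2)\Delta(S(a))$ gives the coproduct formula.
\end{enumerate}

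\textbf{Two minor slips.} First, applying $\varepsilon$ to $S(h_1)h_2$ uses $h_1\varepsilon(h_2)=\omega(h)$ and gives $\varepsilon S\omega=\varepsilon$; your $\psi$-version comes from $h_1S(h_2)$ instead. Second, collapsing $S(a_1b_1)(a_2b_2)$ leaves the $S(b)S(a)$ side, not $S(ab)$.
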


Let $H$ be a BiHom-bialgebra. A \emph{left $H$-module} is a 6-tuple $(M,\tr_M,\alpha_M,\beta_M,\om_M,\psi_M)$, in which $M$ is a linear space, $\alpha_M,\beta_M,\om_M,\psi_M\colon M\to M$ are linear isomorphisms, $\tr \colon H\otimes M\to M : h\otimes m\m h\tr m$ is linear map, such that, for any $h,g\in H$, $m\in M$,
\begin{align*}
\begin{cases}
&\alpha_M,\beta_M,\om_M,\psi_M \text{ commute with each other},\\
&\alpha(h)\tr \alpha_M(m)=\alpha_M(h\tr m),\quad \beta(h)\tr \beta_M(m)=\beta_M(h\tr m),\\
&\om(h)\tr \om_M(m)=\om_M(h\tr m),\quad \psi(h)\tr \psi_M(m)=\psi_M(h\tr m),\\
&\alpha(h)\tr (g\tr m)=(hg)\tr \beta_M(m),\quad 1_H\tr m=\beta_M(m).
\end{cases}
\end{align*}

Similarly, we can define the \emph{right $H$-module}.

A \emph{left $H$-module map} $f:(M,\tr_M,\alpha_M,\beta_M,\om_M,\psi_M)\lr (N,\tr_N,\alpha_N,\beta_N,\om_N,\psi_N)$ is a map $f:M\lr N$ such that $\alpha_N\circ f=f\circ \alpha_M, \beta_N\circ f=f\circ \beta_M, \om_N\circ f=f\circ \om_M, \psi_N\circ f=f\circ \psi_M$ 
and $f\circ \tr_M=\tr_N\circ (\mathrm{id}_H\o  f)$. Moreover, if $(M,\tr_M,\alpha_M,\beta_M,\om_M,\psi_M)$ is both a left $H$-module (via action $\tr$) and right $H$-module (via action $\tl$) and satisfies
\begin{align*}
\a_{H}(a)\tr (m\tl b)=(a\tr m)\tl \b_{H}(b),
\end{align*}
for all $a,b\in H, m\in M$, we call $M$ is an \emph{$H$-bimodule}.

Let $H$ be a BiHom-bialgebra. A BiHom-algebra $D$ is called a \emph{left $H$-module BiHom-algebra} if $D$ is a left $H$-module together with the action $\tr$ such that, for any $a,b \in D$ and $h \in H$:
\begin{align*}
h\tr (ab) =(\alpha^{-1}\omega^{-1}(h_1) \tr a)(\beta^{-1}\psi^{-1}(h_2)\tr b), \quad h\tr 1_{D} =\v(h)1_{D}.
\end{align*}

Similarly, we have right $H$-module BiHom-algebra. Moreover, $D$ is called \emph{$H$-bimodule BiHom-algebra} if $D$ as an $H$-BiHom-bimodule is both left $H$-module BiHom-algebra and right $H$-module BiHom-algebra.

Let $H$ be a BiHom-bialgebra. A \emph{right $H$-comodule} is a 6-tuple $(M, \r_M, \alpha_M,\beta_M,\om_M,\psi_M)$, in which  $M$ is a linear space, $\alpha_M,\beta_M,\om_M,\psi_M\colon M\to M$ are linear isomorphisms, $\r\colon M\to M\otimes H : m \m m_{(0)}\otimes m_{(1)}$ is linear map, such that,  for any $m\in M$,
\begin{align*}
\begin{cases}
&\om_M,\psi_M,\alpha_M,\beta_M \text{ commute with each other},\\
&(\alpha_M\otimes\alpha_{H})\circ\r=\r\circ\alpha_M,\quad (\beta_M\otimes\beta_{H})\circ\r=\r\circ\beta_M,\\
&(\om_M\otimes\om_{H})\circ\r=\r\circ\om_M,\quad (\psi_M\otimes\psi_{H})\circ\r=\r\circ\psi_M,\\
&\om_M(m_{(0)})\otimes m_{{(1)1}}\otimes m_{{(1)2}}=m_{(0)(0)}\otimes m_{(0)(1)}\otimes\psi_{H}(m_{(1)}),\quad m_{(0)}\varepsilon(m_{(1)})=\om_M(m).
\end{cases}
\end{align*}

Similarly, we can define the \emph{left $H$-comodule}.

A \emph{right $H$-comodule map} $f:(M,\r_M,\alpha_M,\beta_M,\om_M,\psi_M)\lr (N,\r_N,\alpha_N,\beta_N,\om_N,\psi_N)$ is a map $f:M\lr N$ such that 
$\alpha_N\circ f=f\circ\alpha_M, \beta_N\circ f=f\circ\beta_M,\om_N\circ f=f\circ\om_M,
\psi_N\circ f=f\circ\psi_M, \r_N\circ f=(f\otimes\mathrm{id}_H)\circ\r_M$.
Moreover, if $(M,\r_M,\alpha_M,\beta_M,\om_M,\psi_M)$ is both a left $H$-comodule (via coaction $\r^{l}_M(m)=m_{[-1]}\o m_{[0]}$) and right $H$-comodule (via action $\r^{r}_M(m)=m_{(0)}\o m_{(1)}$) and satisfies
\begin{align*}
&\om_{H}(m_{[-1]})\o m_{[0](0)}\o m_{[0](1)}=m_{(0)[-1]}\o m_{(0)[0]}\o \psi_{H}(m_{(1)}),
\end{align*}
for all $m\in M$, we call $M$ is a \emph{$H$-bicomodule}.

Let $H$ be a BiHom-bialgebra. A BiHom-coalgebra $D$ is called a \emph{left $H$-comodule BiHom-coalgebra} if $D$ is a left $H$-comodule together with the coaction $\r$ such that, for any $c\in D$ and $h \in H$:
\begin{align*}
c_{[-1]}\o c_{[0]1}\o c_{[0]2}=\alpha_H^{-1}\omega_H^{-1}(c_{1[-1]})\beta_H^{-1}\psi_H^{-1}(c_{2[-1]})\o c_{1[0]}\o c_{2[0]}
, \quad c_{[-1]}\v_{C}(c_{[0]})=\v_{C}(c)1.
\end{align*}

Similarly, we have right $H$-comodule BiHom-coalgebra. Moreover, $D$ is called \emph{$H$-bicomodule BiHom-coalgebra} if $D$ as an $H$-bicomodule is both left $H$-comodule BiHom-coalgebra and right $H$-comodule BiHom-coalgebra.

\section{BiHom-L-R-$\binom{m,n,p,q}{s,t,u,v}$-smash biproduct}
\def\theequation{3.\arabic{equation}}
\setcounter{equation} {0}
In this section, we first introduce the notions of BiHom-L-R-$(m,n,p,q)$-smash product algebra $D\n_{m,n,p,q} H$ and Bihom-L-R-$(s,t,u,v)$-smash coproduct coalgebra $D\n^{s,t,u,v} H$, where $m,n,p,q,s,t,u,v\in \mathbb{Z}$. Then the sufficient condition for $D\n_{m,n,p,q} H$ and $D\n^{s,t,u,v} H$ on $D\o H$ to be a BiHom-bialgebra are derived (called BiHom-L-R-$\binom{m,n,p,q}{s,t,u,v}$-smash biproduct, denoted by $D\n H$).
\subsection{BiHom-L-R-$(m,n,p,q)$-smash product}
\begin{proposition}
Let $H$ be a BiHom-bialgebra, $D$ be an $H$-bimodule BiHom-algebra and $m,n,p,q\in \mathbb{Z}$. Then $(D\n_{m, n,p,q}H, \b\o \a)$ ($=D\o H$ as a vector space) is a BiHom-algebra with the unit $1_{D}\n 1_{H}$ and the multiplication
\begin{align*}
(a\n_{m, n,p,q} h)(b\n_{m, n,p,q} g)=(\alpha_D^{-1}(a)\tl \a^{m}\psi^{n}(g_2))(\b^{p}\om^{q}(h_1)\tr\beta_D^{-1}(b))\n \psi^{-1}(h_2)\omega^{-1}(g_1),
\end{align*}
for all $a, b\in D, h,g\in H$. In this case, we call $D\n_{m, n,p,q}H$ BiHom-L-R-$(m,n,p,q)$-smash product algebra.
\end{proposition}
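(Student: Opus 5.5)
We verify the BiHom-algebra axioms for $D\otimes H$ with structure maps $\alpha_{D\natural H}=\alpha_D\otimes\alpha$ and $\beta_{D\natural H}=\beta_D\otimes\beta$. (We read the notation $(D\natural_{m,n,p,q}H,\beta\otimes\alpha)$ as this pair of twisting maps.) Throughout we use the compatibilities: $\alpha,\beta,\omega,\psi$ commute pairwise; $\alpha,\beta$ are coalgebra maps and $\omega,\psi$ are algebra maps of $H$; and the twists of $D$ intertwine both actions (for the right action, $\alpha_D(a\triangleleft h)=\alpha_D(a)\triangleleft\alpha(h)$, and so on).

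\textbf{Step 1 (easy axioms).} We first check that $\alpha_D\otimes\alpha$ and $\beta_D\otimes\beta$ commute and are multiplicative. Multiplicativity follows by pushing $\alpha_D$ through the product in $D$, then through both actions, and using that $\alpha$ commutes with $\Delta$ and with $\alpha^m\psi^n$, $\beta^p\omega^q$, $\psi^{-1}$, $\omega^{-1}$.

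\textbf{Step 2 (unit).} Since $\Delta(1_H)=1_H\otimes1_H$ and $\alpha,\psi,\beta,\omega$ fix $1_H$, we get
\[
(a\natural h)(1_D\natural 1_H)=(\alpha_D^{-1}(a)\triangleleft 1_H)(\beta^p\omega^q(h_1)\triangleright 1_D)\natural\psi^{-1}(h_2).
\]
The first factor equals $a$, because $\alpha_D^{-1}(a)\triangleleft1_H=\alpha_D(\alpha_D^{-1}(a))$ by the right-module unit axiom. The module-algebra axiom gives $\beta^p\omega^q(h_1)\triangleright1_D=\varepsilon(h_1)1_D$, and then $a1_D=\alpha_D(a)$. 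Finally, $\varepsilon(h_1)\psi^{-1}(h_2)=\psi^{-1}\psi(h)=h$, so $(a\natural h)(1_D\natural1_H)=\alpha_D(a)\natural\alpha(h)$. This seems off by $\alpha$ on the $H$ factor. We therefore recheck the counit convention: $\varepsilon(h_1)h_2=\psi(h)$ gives $\psi^{-1}(\psi(h))=h$. To get $\alpha(h)$ we instead need the factor $h_1\cdot 1=\alpha(h_1)$ that arises in $H$. So we must carefully record which twist appears; we will settle this, with the exact normalisation of $1_H$-actions, during the write-up. The left unit is handled symmetrically using $1_H\triangleright b=\beta_D(b)$ and $\varepsilon(g_2)\omega^{-1}(g_1)=\omega^{-1}\omega(g)$.

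\textbf{Step 3 (BiHom-associativity; the main obstacle).} We must show
\[
\alpha_{D\natural H}(a\natural h)\,[(b\natural g)(c\natural l)]=[(a\natural h)(b\natural g)]\,\beta_{D\natural H}(c\natural l).
\]
We expand both sides fully. On each side, the $H$-component becomes a product of three pieces of $h,g,l$, which are rebracketed by BiHom-associativity of $H$. The $D$-component becomes a product of three terms: $a$ acted on the right by pieces of $g$ and $l$; $b$ acted on the left by a piece of $h$ and on the right by a piece of $l$; and $c$ acted on the left by pieces of $h$ and $g$. The tools are as follows.
\begin{itemize}
\item The module-algebra axioms split actions over products, introducing the factors $\alpha^{-1}\omega^{-1}$ and $\beta^{-1}\psi^{-1}$.
\item The module axioms $\alpha(h)\triangleright(g\triangleright x)=(hg)\triangleright\beta_D(x)$ and their right analogues merge iterated actions.
\item The bimodule axiom $\alpha(a)\triangleright(m\triangleleft b)=(a\triangleright m)\triangleleft\beta(b)$ handles the middle term.
\item BiHom-coassociativity, in the form $\omega(h_1)\otimes h_{21}\otimes h_{22}=h_{11}\otimes h_{12}\otimes\psi(h_2)$, reindexes Sweedler legs, and multiplicativity of $\Delta$ handles $\Delta(\psi^{-1}(h_2)\omega^{-1}(g_1))$.
\end{itemize}
The strategy is to rewrite both sides into a common normal form, $(a')(b')(c')$ bracketed via BiHom-associativity of $D$, with all Sweedler indices written as $h_1,h_2,h_3$ using coassociativity. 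We then match the exponents of $\alpha,\beta,\omega,\psi$ on each leg. This exponent bookkeeping is the main difficulty. Because the integers $m,n,p,q$ enter only through the maps $\alpha^m\psi^n$ and $\beta^p\omega^q$, which commute with everything, they cancel symmetrically on both sides; the crucial checks are the $\omega^{\pm1},\psi^{\pm1}$ shifts produced by coassociativity.
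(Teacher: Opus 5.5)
Your route, direct verification of the axioms for $\alpha_D\otimes\alpha$ and $\beta_D\otimes\beta$, is the paper's route. As written, however, the proposal does not yet prove the statement: Step 2 contains a slip, and Step 3 is only outlined.

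\textbf{The unit (Step 2).} The discrepancy you found is a slip in your own expansion, not a problem with conventions. You dropped the factor $\omega^{-1}(g_1)$ with $g=1_H$. The $H$-component of $(a\natural h)(1_D\natural 1_H)$ is
\[ \psi^{-1}(h_2)\omega^{-1}(1_H)=\psi^{-1}(h_2)1_H=\alpha\psi^{-1}(h_2). \]
Combined with the scalar $\varepsilon(h_1)$ coming from $\beta^p\omega^q(h_1)\triangleright 1_D$, this gives $\alpha\psi^{-1}(\psi(h))=\alpha(h)$, as required. Your ``symmetric'' left-unit argument repeats the slip and, as written, would produce $\beta_D(b)\natural g$. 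The missing factor is $\psi^{-1}(1_H)\omega^{-1}(g_1)=\beta\omega^{-1}(g_1)$, which yields $\beta(g)$. You also need $1_D\triangleleft k=\varepsilon(k)1_D$ from the right module-algebra axiom there.

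\textbf{Associativity (Step 3).} This step is the entire content of the proposition, and you only list tools. It does close for all $m,n,p,q$: the only facts used about $A=\alpha^m\psi^n$ and $B=\beta^p\omega^q$ are that they are bialgebra automorphisms commuting with every structure map. But the computation has to be carried out. Following the paper:
\begin{enumerate}
\item Expand $[(a\natural h)(b\natural g)](\beta_D(c)\natural\beta(f))$.
\item Split $\triangleleft A\beta(f_2)$ over the product by the right module-algebra axiom. Merge $[\alpha_D^{-2}(a)\triangleleft A\alpha^{-1}(g_2)]\triangleleft A\alpha^{-1}\beta\omega^{-1}(f_{21})$ into $\alpha_D^{-1}(a)\triangleleft A\alpha^{-1}(g_2\omega^{-1}(f_{21}))$.
\item By the bimodule axiom, the middle factor becomes $B(h_1)\triangleright\beta_D^{-1}(y)$, where $y=\alpha_D^{-1}(b)\triangleleft A\psi^{-1}(f_{22})$.
\item Write $B(\psi^{-1}(h_{21})\omega^{-1}(g_{11}))\triangleright c$ as the iterated action $B\alpha\psi^{-1}(h_{21})\triangleright z$, where $z=B\omega^{-1}(g_{11})\triangleright\beta_D^{-1}(c)$.
\item Rebracket with $(PQ)R=\alpha_D(P)(Q\beta_D^{-1}(R))$.
\end{enumerate}
The decisive move comes next: reindex $h$ by coassociativity and apply the left module-algebra axiom \emph{backwards},
\[ [B\omega^{-1}(h_{11})\triangleright\beta_D^{-1}(y)]\,[B\alpha\beta^{-1}\psi^{-1}(h_{12})\triangleright\beta_D^{-1}(z)]=B\alpha(h_1)\triangleright\beta_D^{-1}(yz). \]
This is exactly the shape of the $D$-component of $(\alpha_D(a)\natural\alpha(h))[(b\natural g)(c\natural f)]$. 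Coassociativity in $g$ and $f$, plus one use of BiHom-associativity in $H$, then matches the remaining legs. Aiming at this shape, with $h$ acting on a product, is more efficient than your fully split normal form $(a')(b')(c')$, although that would also work.
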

\begin{proof}
For all $a, b,c\in D, h,g,k\in H$, first $(\a_{D}\o \a)((a\n h)(b\n g))=((\a_{D}\o \a)(a\n h))((\a_{D}\o \a)(b\n g)),(\b_{D}\o \b)((a\n h)(b\n g))=((\b_{D}\o \b)(a\n h))((\b_{D}\o \b)(b\n g)), (\a_{D}\o \a)\circ (\b_{D}\o \b)=(\b_{D}\o \b)\circ (\a_{D}\o \a)$.
Let $A=\a^{m}\psi^{n}, B=\b^{p}\om^{q}$. Then we have
\begin{align*}
&[(a\n h)(b\n  g)](\b_{D}(c)\n  \b(f))
\\&=[(\alpha_D^{-1}(a)\tl  A(g_2))( B(h_1)\tr\beta_D^{-1}(b))\n \psi^{-1}(h_2)\omega^{-1}(g_1)](\b_{D}(c)\n  \b (f))
\\&=(\alpha_D^{-1}((\alpha_D^{-1}(a)\tl  A(g_2))( B(h_1)\tr\beta_D^{-1}(b)))\tl  A\b (f_{2}))
\\&\quad ( B(\psi^{-1}(h_{21})\omega^{-1}(g_{11}))\tr c)
\n \psi^{-1}(\psi^{-1}(h_{22})\omega^{-1}(g_{12}))\b\omega^{-1} (f_{1})
\\&=(([\a_{D}^{-2}(a)\tl A\a^{-1} (g_{2})]\tl A\a^{-1}\b \omega^{-1}(f_{21}))[[ B\a^{-1}(h_{1})\tr\alpha_D^{-1}\beta_D^{-1}(b)]\tl  A\psi^{-1}(f_{22})])
\\&\quad[[ B\psi^{-1}(h_{21}) B\om^{-1}(g_{11})]\tr c]
\n [\psi^{-2}(h_{22})\omega^{-1}\psi^{-1}(g_{12})]\b\omega^{-1}(f_{1})
\\&=([\a_{D}^{-1}(a)\tl  A(g_{2})]\tl A\b \omega^{-1}(f_{21}))([[ B\a^{-1}(h_{1})\tr\alpha_D^{-1}\beta_D^{-1}(b)]\tl  A\psi^{-1}(f_{22})]
\\&\quad[[ B\beta^{-1}\psi^{-1}(h_{21})B\beta^{-1} \om^{-1}(g_{11})]\tr\beta_D^{-1}(c)])
\n [\psi^{-2}(h_{22})\omega^{-1}\psi^{-1}(g_{12})]\b\omega^{-1}(f_{1})
\\&=([\a_{D}^{-1}(a)\tl  A(g_{2})]\tl A\b \omega^{-1}(f_{21}))([ B(h_{1})\tr\beta_D^{-1}(\alpha_D^{-1}(b)\tl  A\psi^{-1}(f_{22}))]
\\&\quad[ B\a \beta^{-1}\psi^{-1}(h_{21})\tr\beta_D^{-1}( B\om^{-1}(g_{11})\tr\beta_D^{-1}(c))])
\n [\psi^{-2}(h_{22})\omega^{-1}\psi^{-1}(g_{12})]\b\omega^{-1}(f_{1})
\\&=([\a_{D}^{-1}(a)\tl  A\psi^{-1}(g_{22})]\tl A\b \omega^{-1}(f_{12}))([ B\omega^{-1}(h_{11})\tr\beta_D^{-1}(\alpha_D^{-1}(b)\tl  A(f_2))]
\\&\quad[ B\a \beta^{-1}\psi^{-1}(h_{12})\tr\beta_D^{-1}( B(g_1)\tr\beta_D^{-1}(c))])
\n [\psi^{-1}(h_2)\omega^{-1}\psi^{-1}(g_{21})]\b\omega^{-2}(f_{11})
\\&=(a\tl  A(\psi^{-1}(g_{22})\omega^{-1}(f_{12})))( B\a (h_1)\tr\beta_D^{-1}((\alpha_D^{-1}(b)\tl  A(f_2))( B(g_1)\tr\beta_D^{-1}(c))))
\\&\quad\n \a \psi^{-1}(h_2)\omega^{-1}(\psi^{-1}(g_{21})\omega^{-1}(f_{11}))
\\&=(\a_{D}(a)\n  \a (h))[(\alpha_D^{-1}(b)\tl  A(f_2))( B(g_1)\tr\beta_D^{-1}(c))\n \psi^{-1}(g_2)\omega^{-1}(f_1)]
\\&=(\a_{D}(a)\n  \a (h))[(b\n g)(c\n  f)]
\end{align*}
and
\begin{align*}
(a \n h)(1_{D}\n 1_{H})=\a_{D}(a)\n \a(h),(1_{D}\n 1_{H})(a\n h)=\b_{D}(a)\n \b(h).
\end{align*}
Finally $(\a_{D}\o \a)(1_{D}\n 1_{H})=1_{D}\n 1_{H}=(\b_{D}\o \b)(1_{D}\n 1_{H})$.

The proof is completed.
\end{proof}
\subsection{L-R-$(s,t,u,v)$-smash coproduct}
\begin{proposition}
Let $H$ be a BiHom-bialgebra, $D$ be an $H$-bicomodule BiHom-coalgebra and $s,t,u,v\in \mathbb{Z}$. Then $D\n^{s,t,u,v}H$ ($=D\o H$ as a vector space) is a BiHom-coalgebra with the counit $\v_{D}\n \v_{H}$ and the comultiplication
\begin{align*}
\D_{D\n H}(c\n^{s,t,u,v} h)=\om_D^{-1}(c_{1(0)})\n\a^{s}\psi^{t}(c_{2[-1]})\b^{-1}(h_1)\o \psi_D^{-1}(c_{2[0]})\n\alpha^{-1}( h_{2}) \b^{u}\om^{v}(c_{1(1)}),
\end{align*}
for all $c\in D, h\in H$. In this case, we call $(D\sharp^{(s,t,u,v)}H, \b\o \a)$ BiHom-L-R-$(s,t,u,v)$-smash coproduct coalgebra.
\end{proposition}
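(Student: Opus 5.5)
The proof dualizes the argument for the BiHom-L-R-$(m,n,p,q)$-smash product. We take the structure maps of $D\n^{s,t,u,v}H$ to be $\om_D\o\om$ and $\psi_D\o\psi$, the coalgebra twists, although the statement writes $\b\o\a$. Write $A'=\a^{s}\psi^{t}$ and $B'=\b^{u}\om^{v}$. All structure maps of $H$ commute with each other, and $\a,\b$ are BiHom-coalgebra maps. Hence $A'$ and $B'$ commute with $\om,\psi,\a,\b$ and with $\D$, $\v$, in the sense that $\D\circ A'=(A'\o A')\circ\D$, and likewise for $B'$.

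\textbf{Step 1: structure maps and counit.} First we check the routine identities:
\begin{itemize}
\item $\D_{D\n H}\circ(\om_D\o\om)=((\om_D\o\om)\o(\om_D\o\om))\circ\D_{D\n H}$, and the same for $\psi_D\o\psi$;
\item $(\om_D\o\om)(\psi_D\o\psi)=(\psi_D\o\psi)(\om_D\o\om)$;
\item $(\v_D\o\v_H)\circ(\om_D\o\om)=\v_D\o\v_H$, and the same for $\psi_D\o\psi$.
\end{itemize}
These follow because the coactions commute with $\om_D,\psi_D$ and the comultiplications are multiplicative for the twists.

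Next the counit axioms. Applying $\mathrm{id}\o\v_D\o\v_H$ to $\D_{D\n H}(c\n h)$ uses $\v_D\circ\psi_D^{-1}=\v_D$ and the left comodule coalgebra counit condition $c_{2[-1]}\v_D(c_{2[0]})=\v_D(c_2)1$. It also uses $\v(B'(c_{1(1)}))$ and the right counit law $c_{(0)}\v(c_{(1)})=\om_D(c)$, together with $\b^{-1}(h_1)\v(h_2)=\b^{-1}\om(h)$ and $1\,x=\b(x)$. Collecting these gives $\om_D(c)\n\om(h)$. The other side gives $\psi_D(c)\n\psi(h)$ symmetrically.

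\textbf{Step 2: BiHom-coassociativity.} This is the main computation. We must show
$$(\om_D\o\om)(c\n h)_{1}\o\D_{D\n H}\big((c\n h)_{2}\big)=\D_{D\n H}\big((c\n h)_{1}\big)\o(\psi_D\o\psi)(c\n h)_{2}.$$
Expanding both sides gives expressions in $c_{1},c_{2},c_{3}$ with coactions and $h_1,h_2,h_3$. The tools, used in order, are:
\begin{itemize}
\item the comodule coalgebra axioms, to distribute $(c_{2[0]})_1\o(c_{2[0]})_2$ and $(c_{1(0)})_{1}\o(c_{1(0)})_2$, and to convert $\D$ of $c_{2[-1]}$ and $c_{1(1)}$ into products of coaction legs;
\item multiplicativity of $\D_H$, to split $\D(A'(c_{2[-1]})\b^{-1}(h_1))$ and $\D(\a^{-1}(h_2)B'(c_{1(1)}))$;
\item the bicomodule compatibility $\om(m_{[-1]})\o m_{[0](0)}\o m_{[0](1)}=m_{(0)[-1]}\o m_{(0)[0]}\o\psi(m_{(1)})$, to exchange the order of left and right coactions on the middle tensorand;
\item BiHom-coassociativity of $D$ and of $H$, and the comodule coassociativity, to reindex, shifting $\om$ and $\psi$ between legs.
\end{itemize}
We mirror the chain of equalities in the smash product proof, reading it backwards with arrows reversed.

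The main obstacle is bookkeeping the powers of $\om_D,\psi_D,\om,\psi,\a,\b$ so that both sides land on the same normal form. To control this, we first reduce both sides to a canonical expression of the shape
$$\om_D^{i}(c_{1(0)})\n X\o Y\n Z\o \psi_D^{j}(c_{3[0]})\n W,$$
with $c$ split using BiHom-coassociativity into the form $\om_D(c_1)\o c_{21}\o c_{22}$ versus $c_{11}\o c_{12}\o\psi_D(c_2)$. We then check equality factor by factor.

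In the $H$-factors, products like $\a^{-1}(h_2)B'(c_{1(1)})$ are split by $\D$ on each side. Regrouping products of three elements needs BiHom-associativity, since a twist $\a$ or $\b$ is inserted, and this is why the $\a^{-1},\b^{-1}$ in the formula appear. We would verify these by direct calculation. The unit/counit compatibilities are then immediate, completing the proof.
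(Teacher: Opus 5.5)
Your proposal is correct and follows the same route as the paper. You rightly take the coalgebra twists to be $\om_D\o\om$ and $\psi_D\o\psi$ rather than the $\b\o\a$ written in the statement; the paper's proof also uses these. The argument then checks their compatibility with $\D_{D\n H}$ and $\v_D\o\v_H$, verifies the counit, and expands $(\D_{D\n H}\o\psi_D\o\psi)\D_{D\n H}$ and $(\om_D\o\om\o\D_{D\n H})\D_{D\n H}$ term by term, using exactly your listed tools: the comodule-coalgebra axioms, comodule coassociativity, the bicomodule compatibility, BiHom-coassociativity of $D$ and $H$, and BiHom-associativity to regroup the $H$-legs. These tools do close the computation; you leave that final calculation unwritten, whereas the paper writes out the full chain of equalities.
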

\begin{proof}
For all $c\in D,h\in H$, first $\D_{D\n H}\circ (\om_{D}\o \om)=(\om_{D}\o \om\o \om_{D}\o \om)\circ \D_{D\n H}, \D_{D\n H}\circ (\psi_{D}\o \psi)=(\psi_{D}\o \psi\o \psi_{D}\o \psi)\circ \D_{D\n H}, (\om_{D}\o \om)\circ (\psi_{D}\o \psi)=(\psi_{D}\o \psi)\circ (\om_{D}\o \om)$. Let $C=\a^{s}\psi^{t}, D=\b^{u}\om^{v}$. Then
\begin{align*}
&(\D_{D\n H}\o \psi_D\o \psi )\circ \D_{D\n H}
\\&=(\D_{D\n H}\o \psi_D\o \psi )[\om^{-1}(c_{1(0)})\n C(c_{2[-1]})\b ^{-1}(h_1)\o \psi_D^{-1}(c_{2[0]})\n\alpha ^{-1}( h_{2}) D(c_{1(1)})]
\\&=\om_D^{-2}(c_{1(0)1(0)})\n C\om ^{-1}(c_{1(0)2[-1]})[ C\b ^{-1}(c_{2[-1]1})\b ^{-2}(h_{11})]
\o \om_D^{-1}\psi_D^{-1}(c_{1(0)2[0]})\\&\quad\n [ C\a^{-1}(c_{2[-1]2})\a^{-1}\b ^{-1}(h_{12})]  D\om ^{-1}(c_{1(0)1(1)})
\o c_{2[0]}\n \alpha ^{-1}\psi (h_{2})D\psi (c_{1(1)})
\\&=\om_D^{-2}(c_{11(0)(0)})\n C\omega ^{-1}(c_{12(0)[-1]})[ C\beta ^{-1}(c_{2[-1]1})\b ^{-2}(h_{11})]
\\&\quad\o \om_D^{-1}\psi_D^{-1}(c_{12(0)[0]})\n[ C\a^{-1}(c_{2[-1]2})\alpha ^{-1}\b ^{-1}( h_{12})] D\om^{-1}(c_{11(0)(1)})
\\&\quad\o c_{2[0]}\n\alpha ^{-1}\psi( h_{2}) [D\alpha ^{-1}\om^{-1}\psi(c_{11(1)}) D\beta ^{-1}(c_{12(1)})]
\\&=\om_D^{-2}(c_{11(0)(0)})\n C\om(\alpha ^{-1}\omega ^{-2}(c_{12(0)[-1]})\beta ^{-1}\om^{-1}(c_{2[-1]1}))\b ^{-1}(h_{11})
\\&\quad\o \om_D^{-1}\psi_D^{-1}(c_{12(0)[0]})\n C(c_{2[-1]2})[\alpha ^{-1}\b ^{-1}( h_{12}) D\b ^{-1}\om^{-1}(c_{11(0)(1)})]
\\&\quad\o c_{2[0]}\n[\alpha ^{-2}\psi( h_{2}) D\alpha ^{-1}\om^{-1}\psi(c_{11(1)})] D(c_{12(1)})
\\&=\om_D^{-1}(c_{11(0)})\n C\om(\alpha ^{-1}\omega ^{-1}(c_{12[-1]})\beta ^{-1}(c_{2[-1]}))\b ^{-1}(h_{11})
\\&\quad\o \om_D^{-1}\psi_D^{-1}(c_{12[0](0)})\n C(c_{2[0][-1]})[\alpha ^{-1}\b ^{-1}( h_{12}) D\b ^{-1}\om^{-1}(c_{11(1)1})]
\\&\quad\o \psi_D^{-1}(c_{2[0][0]})\n[\alpha ^{-2}\psi( h_{2}) D\alpha ^{-1}\om^{-1}(c_{11(1)2})] D\psi^{-1}(c_{12[0](1)})
\\&=c_{1(0)}\n C\om(\alpha ^{-1}\omega ^{-1}(c_{21[-1]})\beta ^{-1}\psi ^{-1}(c_{22[-1]}))\b ^{-1}\om(h_1)
\\&\quad\o \om_D^{-1}\psi_D^{-1}(c_{21[0](0)})\n C\psi^{-1}(c_{22[0][-1]})[\alpha ^{-1}\b ^{-1}( h_{21}) D\b ^{-1}(c_{1(1)1})]
\\&\quad\o \psi_D^{-2}(c_{22[0][0]})\n[\alpha ^{-2}( h_{22}) D\alpha ^{-1}(c_{1(1)2})] D\psi^{-1}(c_{21[0](1)})
\\&=c_{1(0)}\n C\om(c_{2[-1]})\b ^{-1}\om(h_1)
\\&\quad\o \om_D^{-1}\psi_D^{-1}(c_{2[0]1(0)})\n C\psi^{-1}(c_{2[0]2[-1]})[\alpha ^{-1}\b ^{-1}( h_{21}) D\b ^{-1}(c_{1(1)1})]
\\&\quad\o \psi_D^{-2}(c_{2[0]2[0]})\n[\alpha ^{-2}( h_{22}) D\alpha ^{-1}(c_{1(1)2})] D\psi^{-1}(c_{2[0]1(1)})
\\&=(\om_D\o \om \o \D_{D\n H})[\om_D^{-1}(c_{1(0)})\n C(c_{2[-1]})\b ^{-1}(h_1)\o \psi^{-1}(c_{2[0]})\n\alpha ^{-1}( h_{2}) D(c_{1(1)})]
\\&=(\om_D\o \om \o \D_{D\n H})\circ \D_{D\n H}
\end{align*}
and
\begin{align*}
(\mathrm{id}_{D\n H}\o \v_{D\n H})\circ \D_{D\n H}(a\n h)=\om_{D}(a)\n \om (h), (\v_{D\n H}\o& \mathrm{id}_{D\n H})\circ \D_{D\n H}(a\n h)=\psi_{D}(a)\n \psi(h).
\end{align*}
Finally, $\v_{D\n H}\circ (\om_{D}\o \om)=\v_{D\n H}=\v_{D\n H}\circ (\psi_{D}\o \psi)$.

The proof is completed.
\end{proof}
\subsection{BiHom-L-R-$\binom{m,n,p,q}{s,t,u,v}$-smash biproduct}
\begin{definition}
Let $H$ be a BiHom-bialgebra, and let $D$ be an $H$-bimodule BiHom-algebra and an $H$-bicomodule BiHom-coalgebra. Let $\a_{D},\b_{D},\om_{D},\psi_{D}\in Aut(D)$, where $\a_{D},\b_{D}$ are BiHom-coalgebra maps, $\om_{D},\psi_{D}$ are BiHom-algebra maps, and they commute with each other. For all $h\in H, a,b\in D$, consider the following list of conditions:
\begin{align}
&\v_{D}(1_{D})=1,\quad \v_{D}(ab)=\v_{D}(a)\v_{D}(b),
\\&\v_{D}(h\tr a)=\v_{D}(a\tl h)=\v_{H}(h)\v_{D}(a),
\\&\r_{R}(1_{D})=1_{D}\o 1_{H}, \quad \r_{L}(1_{D})=1_{H}\o 1_{D},
\\&\r_{R}(ab)=\r_{R}(a)\r_{R}(b),\quad \r_{L}(ab)=\r_{L}(a)\r_{L}(b),
\\&\D_{D}(1_{D})=1_{D}\o 1_{D},
\\&\D_{D}(h\tr a)=h_{1}\tr a_{1}\o h_{2}\tr a_{2},\quad \D_{D}(a\tl h)=a_{1}\tl h_{1}\o a_{2}\tl h_{2},
\\&\Delta_D(ab) = a_{1}(\a^{s+1}\b^{p}\om^{p+1}\psi^{t}(a_{2[-1]})\tr\beta_D^{-1}\om_D^{-1}(b_{1(0)}))\notag 
\\&\qquad\qquad\quad \o (\alpha_D^{-1}\psi_D^{-1}(a_{2[0]})\tl \a^{m}\b^{u+1}\om^{v}\psi^{n+1}(b_{1(1)}))b_{2}
\\&(\a^{s}\b^{p+1}\om^{q}\psi^{t+1}(h_1)\tr b)_{[-1]}\a\b_H^{-1}\psi_H^{-1}\om(h_{2})\o (\a^{s}\b^{p+1}\om^{q}\psi^{t+1}(h_1)\tr b)_{[0]}\notag
\\&\qquad\qquad\quad=h_{1}\a_H(b_{[-1]})\o \a^{s}\b^{p+1}\om^{q+1}\psi^{t+1}( h_{2}) \tr b_{[0]}
\\&(h\tr b)_{(0)}\o(h\tr b)_{(1)}=(\om_{H}(h)\tr b_{(0)})\o \b_{H}(b_{(1)})
\\&[a\tl \a^{m+1}\b^{u}\om^{v+1}\psi^{n}(g_2)]_{(0)}\o\alpha_H^{-1}\b_H\psi_H\omega_H^{-1}(g_{1})[a\tl \a^{m+1}\b^{u}\om^{v+1}\psi^{n}(g_2)]_{(1)}\notag
\\&\qquad\qquad\quad=a_{(0)}\tl \a^{m+1}\b^{u}\om^{v+1}\psi^{n+1}(g_{1})\o\b_H(a_{(1)})g_{2}
\\&([a\tl g]_{[-1]})\o [a\tl g]_{[0]}=\a_{H}(a_{[-1]})\o a_{[0]}\tl \psi_{H}( g)
\\&\a_A^{-1}\om_A^{-1}(a_{(0)})\tl \a^{m+s+1}\om\psi^{n+t}(b_{[-1]})\o \b^{p+u+1}\om^{q+v}\psi(a_{(1)})\tr \beta_A^{-1}\psi_A^{-1}(b_{[0]})=a\o b\label{e3.12}
\end{align}
If all these conditions hold, we will call $(H,D)$ an BiHom-L-R-$\binom{m,n,p,q}{s,t,u,v}$admissible pair.
\end{definition}
\begin{theorem}
If $(H,D)$ is an L-R-$\binom{m,n,p,q}{s,t,u,v}$-admissible pair, then $D\n H$ under BiHom-L-R-$(m, n,p,q)$-smash product and BiHom-L-R-$(s,t,u,v)$-smash coproduct is a BiHom-bialgebra, we will call $D\n H$ BiHom-L-R-$\binom{m,n,p,q}{s,t,u,v}$-smash biproduct bialgebra.
\end{theorem}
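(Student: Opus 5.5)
By the two preceding propositions, $D\n_{m,n,p,q}H$ is already a BiHom-algebra with unit $1_D\n 1_H$ and structure maps $(\a_D\o\a,\b_D\o\b)$, and $D\n^{s,t,u,v}H$ is already a BiHom-coalgebra with counit $\v_D\n\v_H$ and structure maps $(\om_D\o\om,\psi_D\o\psi)$. What remains is the compatibility between the two structures. Concretely, I will verify:
\begin{enumerate}
\item[(i)] $\a_D\o\a$ and $\b_D\o\b$ are BiHom-coalgebra maps, and $\om_D\o\om$, $\psi_D\o\psi$ are BiHom-algebra maps, all commuting with each other;
\item[(ii)] $\v_{D\n H}$ is multiplicative and unital;
\item[(iii)] $\D_{D\n H}(1_D\n 1_H)=(1_D\n 1_H)\o(1_D\n 1_H)$;
\item[(iv)] $\D_{D\n H}$ is multiplicative.
\end{enumerate}

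Step (i) is immediate from the hypotheses. The automorphisms of $D$ and of $H$ are (co)algebra maps and pairwise commute. They are compatible with $\tr,\tl,\r_L,\r_R$ by the module and comodule axioms. The composites $\a^{m}\psi^{n}$, $\b^{p}\om^{q}$, $\a^{s}\psi^{t}$, $\b^{u}\om^{v}$ commute with every structure map.

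Step (ii) uses (3.1), (3.2) and $\v_H(\om^{-1}(g))=\v_H(g)$. Applying $\v_D\o\v_H$ to the product collapses $g_2$ and $h_1$ by counitality, and the remaining factors reassemble to $\v_D(a)\v_H(h)\v_D(b)\v_H(g)$.

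Step (iii) uses (3.3), (3.5) and $\D(1_H)=1_H\o1_H$, together with $\om_D^{-1}(1_D)=\psi_D^{-1}(1_D)=1_D$.

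The main work is (iv). I will expand both sides for $x=a\n h$ and $y=b\n g$. For the left side $\D_{D\n H}(xy)$:
\begin{itemize}
\item First use (3.7) to split $\D_D$ of the product $(\a_D^{-1}(a)\tl A(g_2))(B(h_1)\tr\b_D^{-1}(b))$, where $A=\a^m\psi^n$ and $B=\b^p\om^q$.
\item Then apply (3.6) to distribute $\D_D$ over $\tl$ and $\tr$.
\item Use (3.4) for the coactions of products, and (3.9)--(3.11) to move the coactions $\r_R,\r_L$ past the actions $\tr,\tl$.
\item Condition (3.8) is designed to commute the left coaction with $\tr$ when a second factor of $H$ appears alongside.
\end{itemize}
The right side is $\D(x)\D(y)$ computed in $(D\n H)\o(D\n H)$. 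It produces terms involving $a_1,a_2,b_1,b_2$ with their (co)actions and $h_1,h_2,g_1,g_2$.

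To match the two sides I will:
\begin{itemize}
\item use BiHom-coassociativity of $H$ and of the bicomodules (including the bicomodule compatibility) to reindex Sweedler legs;
\item use BiHom-associativity in $H$ and the bimodule identity $\a(a)\tr(m\tl b)=(a\tr m)\tl\b(b)$ to regroup actions;
\item invoke condition \eqref{e3.12} at the crucial point. It cancels the cross term in which $a_{2}$'s right coaction acts on $b_{1}$'s left coaction, i.e.\ exactly the "braiding-trivial" term arising when $\D_D(ab)$ from (3.7) is combined with the cross factors $C(c_{2[-1]})$ and $D(c_{1(1)})$ of the smash coproduct.
\end{itemize}

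I expect the main obstacle to be the bookkeeping of exponents of $\a,\b,\om,\psi$. The exponents appearing in (3.7)--\eqref{e3.12}, such as $\a^{s+1}\b^{p}\om^{p+1}\psi^{t}$, must line up precisely after each use of BiHom-(co)associativity, since each such use shifts an exponent by one. My plan is to first carry out the calculation in the case $m=n=p=q=s=t=u=v=0$ to identify which condition is used at each stage. I will then redo it tracking only the composites $A,B,C,D$ as opaque commuting automorphisms, inserting the required $\a^{\pm1},\b^{\pm1},\om^{\pm1},\psi^{\pm1}$ from the multiplicativity of $\D$ and the coalgebra-map property of $\a,\b$. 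Finally, I will check at each invocation that the exponents match those in the stated conditions.
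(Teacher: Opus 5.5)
Your plan is essentially the paper's proof: expand $\D[(a\n h)(b\n g)]$ and $\D(a\n h)\D(b\n g)$ directly using (3.4)--(3.11), invoke \eqref{e3.12} (which the paper uses tacitly) to remove the leftover cross terms, and treat the counit and unit checks as routine. One correction to your description: the cross terms that \eqref{e3.12} removes pair $a_{1}$'s right coaction with $b_{2}$'s left coaction. They appear, up to twists, as factors $a_{1(0)(0)}\tl b_{2[0][-1]}$ and $a_{1(0)(1)}\tr b_{2[0][0]}$ when the two tensor legs of $\D(a\n h)\D(b\n g)$ are multiplied, whereas the $a_{2}$--$b_{1}$ interaction is exactly what (3.7) accounts for.
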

\begin{proof}
Let $A=\a^{m}\psi^{n},B=\b^{p}\om^{q},C=\a^{s}\psi^{t},D=\b^{u}\om^{v}$, for all $a, b\in D, h, g\in H$, on one hand,
\begin{align*}
&\Delta(a\n h)\D(b\n g)
\\&=(\om_D^{-1}(a_{1(0)})\n C(a_{2[-1]})\b ^{-1}(h_1)\o \psi_D^{-1}(a_{2[0]})\n\alpha ^{-1}( h_{2})  D(a_{1(1)}))
\\&\quad(\om_D^{-1}(b_{1(0)})\n C(b_{2[-1]})\b ^{-1}(g_1)\o \psi_D^{-1}(b_{2[0]})\n\alpha ^{-1}( g_{2})  D(b_{1(1)}))
\\&=(\alpha_D^{-1}\om_D^{-1}(a_{1(0)})\tl  A[ C(b_{2[-1]2})\b ^{-1}(g_{12})])
( B[ C(a_{2[-1]1})\b ^{-1}(h_{11})]\tr\beta_D^{-1}\om_D^{-1}(b_{1(0)}))
\\&\quad\n\psi ^{-1}[ C(a_{2[-1]2})\b ^{-1}(h_{12})]\omega ^{-1}[ C(b_{2[-1]1})\b ^{-1}(g_{11})]
\\&\quad\o (\alpha_D^{-1}\psi_D^{-1}(a_{2[0]})\tl  A[\alpha ^{-1}( g_{22})  D(b_{1(1)2})])
( B[\alpha ^{-1}( h_{21})  D(a_{1(1)1})]\tr\beta_D^{-1}\psi_D^{-1}(b_{2[0]}))
\\&\quad\n\psi ^{-1}[\alpha ^{-1}( h_{22})  D(a_{1(1)2})]\omega ^{-1}[\alpha ^{-1}( g_{21})  D(b_{1(1)1})]
\\&=(\alpha_D^{-1}\om_D^{-2}(a_{1(0)(0)})\tl  A[ C(b_{2[0][-1]})\b ^{-1}(g_{12})])
( B[ C\om(a_{2[-1]})\b ^{-1}(h_{11})]\tr\beta_D^{-1}\om_D^{-2}(b_{1(0)(0)}))
\\&\quad\n\psi ^{-1}[ C(a_{2[0][-1]})\b ^{-1}(h_{12})]\omega ^{-1}[ C\om(b_{2[-1]})\b ^{-1}(g_{11})]
\\&\quad\o (\alpha_D^{-1}\psi_D^{-2}(a_{2[0][0]})\tl  A[\alpha ^{-1}( g_{22})  D\psi(b_{1(1)})])
( B[\alpha ^{-1}( h_{21})  D(a_{1(0)(1)})]\tr\beta_D^{-1}\psi_D^{-2}(b_{2[0][0]}))
\\&\quad\n\psi ^{-1}[\alpha ^{-1}( h_{22})  D\psi(a_{1(1)})]\omega ^{-1}[\alpha ^{-1}( g_{21})  D(b_{1(0)(1)})]
\\&=([\alpha_D^{-2}\om_D^{-2}(a_{1(0)(0)})\tl A C(b_{2[0][-1]})]\tl  A(g_{12}))
( B[ C\om(a_{2[-1]})\b ^{-1}(h_{11})]\tr\beta_D^{-1}\om_D^{-2}(b_{1(0)(0)}))
\\&\quad\n C\a \psi ^{-1}(a_{2[0][-1]})[[\a ^{-1}\b ^{-1}\psi ^{-1}(h_{12}) C\b ^{-1}(b_{2[-1]})]\b ^{-1}\om ^{-1}(g_{11})]
\\&\quad\o (\alpha_D^{-1}\psi_D^{-2}(a_{2[0][0]})\tl  A[\alpha ^{-1}( g_{22})  D\psi(b_{1(1)})])
( B( h_{21}) \tr[ B D(a_{1(0)(1)})\tr\beta_D^{-2}\psi_D^{-2}(b_{2[0][0]})])
\\&\quad\n[\alpha ^{-1}\psi ^{-1}(h_{22})[ D\alpha ^{-1}(a_{1(1)})\a ^{-1}\b ^{-1}\om ^{-1}(g_{21})]] D\beta \om^{-1}(b_{1(0)(1)})
\\&=\om_D^{-1}((\alpha_D^{-1}(a_{1(0)})\tl  A\om (g_{12}))
( B C\om ^{2}(a_{2[-1]}) B\b ^{-1}\om (h_{11})\tr\beta_D^{-1}\om_D^{-1}(b_{1(0)(0)})))
\\&\quad\n C\a \psi ^{-1}(a_{2[0][-1]})[[\a ^{-1}\b ^{-1}\psi ^{-1}(h_{12}) C\b ^{-1}(b_{2[-1]})]\b ^{-1}\om ^{-1}(g_{11})]
\\&\quad\o \psi_D^{-1}((\alpha_D^{-1}\psi_D^{-1}(a_{2[0][0]})\tl  A\alpha^{-1}\psi (g_{22})  A D\psi ^{2}(b_{1(1)}))
( B\psi ( h_{21}) \tr \b_D^{-1}(b_{2[0]})))
\\&\quad\n[\alpha ^{-1}\psi ^{-1}(h_{22})[ D\alpha ^{-1}(a_{1(1)})\a ^{-1}\b ^{-1}\om ^{-1}(g_{21})]] D\beta \om^{-1}(b_{1(0)(1)})
\end{align*}
On the other hand,
\begin{align*}
&\Delta[(a\n h)(b\n g)]
\\&=\Delta[(\alpha_D^{-1}(a)\tl A(g_2))(B(h_1)\tr\beta_D^{-1}(b))\n\psi^{-1}(h_2)\omega^{-1}(g_1)]
\\&=\om_D^{-1}([(\alpha_D^{-1}(a)\tl A(g_2))(B(h_1)\tr\beta_D^{-1}(b))]_{1(0)})
\\&\quad\n C([(\alpha_D^{-1}(a)\tl A(g_2))(B(h_1)\tr\beta_D^{-1}(b))]_{2[-1]})\b^{-1}[\psi^{-1}(h_{21})\omega^{-1}(g_{11})]
\\&\quad\o \psi_D^{-1}([(\alpha_D^{-1}(a)\tl A(g_2))(B(h_1)\tr\beta_D^{-1}(b))]_{2[0]})
\\&\quad\n\alpha^{-1}[\psi^{-1}(h_{22})\omega^{-1}(g_{12})]  D([(\alpha_D^{-1}(a)\tl A(g_2))(B(h_1)\tr\beta_D^{-1}(b))]_{1(1)})
\\&=\om_D^{-1}([(\a_D^{-1}(a_{1})\tl A(g_{21}))(B C\a\om((\a_D^{-1}(a_{2})\tl A(g_{22}))_{[-1]})
\\&\quad\tr\beta_D^{-1}\om_D^{-1}((B(h_{11})\tr\b_D^{-1}(b_{1}))_{(0)}))]_{(0)})
\\&\quad\n C([(\alpha_D^{-1}\psi_D^{-1}((\a_D^{-1}(a_{2})\tl A(g_{22}))_{[0]})\tl A D\b\psi((B(h_{11})\tr\b_D^{-1}(b_{1}))_{(1)}))
\\&\quad(B(h_{12})\tr\b_D^{-1}(b_{2}))]_{[-1]})\b^{-1}[\psi^{-1}(h_{21})\omega^{-1}(g_{11})]
\\&\quad\o \psi_D^{-1}([(\alpha_D^{-1}\psi_D^{-1}((\a_D^{-1}(a_{2})\tl A(g_{22}))_{[0]})
\\&\quad\tl A D\b\psi((B(h_{11})\tr\b_D^{-1}(b_{1}))_{(1)}))(B(h_{12})\tr\b_D^{-1}(b_{2}))]_{[0]})
\\&\quad\n\alpha^{-1}[\psi^{-1}(h_{22})\omega^{-1}(g_{12})]  D([(\a_D^{-1}(a_{1})\tl A(g_{21}))
(B C\a\om((\a_D^{-1}(a_{2})\tl A(g_{22}))_{[-1]})\tr
\\&\quad\beta_D^{-1}\om_D^{-1}((B(h_{11})\tr\b_D^{-1}(b_{1}))_{(0)}))]_{(1)})
\\&=\om_D^{-1}([(\a_D^{-1}(a_{1})\tl A(g_{21}))(B C\a\om(a_{2[-1]})
\tr\beta_D^{-1}\om_D^{-1}(B\omega(h_{11})\tr\b_D^{-1}(b_{1(0)})))]_{(0)})
\\&\quad\n C([(\alpha_D^{-1}\psi_D^{-1}(\a_D^{-1}(a_{2[0]})\tl A\psi(g_{22}))\tl A D\b\psi(b_{1(1)}))
\\&\quad(B(h_{12})\tr\b_D^{-1}(b_{2}))]_{[-1]})\b^{-1}[\psi^{-1}(h_{21})\omega^{-1}(g_{11})]
\\&\quad\o \psi_D^{-1}([(\alpha_D^{-1}\psi_D^{-1}(\a_D^{-1}(a_{2[0]})\tl A\psi(g_{22}))\tl A D\b\psi(b_{1(1)}))
(B(h_{12})\tr\b_D^{-1}(b_{2}))]_{[0]})
\\&\quad\n\alpha^{-1}[\psi^{-1}(h_{22})\omega^{-1}(g_{12})]  D([(\a_D^{-1}(a_{1})\tl A(g_{21}))
\\&\quad(B C\a\om(a_{2[-1]})\tr\beta_D^{-1}\om_D^{-1}((B\omega(h_{11})\tr\b_D^{-1}(b_{1(0)}))))]_{(1)})
\\&=\om_D^{-1}((\a_D^{-1}(a_{1})\tl A(g_{21}))_{(0)}(B C\om(a_{2[-1]})
B\b^{-1}(h_{11})\tr\beta_D^{-1}\om_D^{-1}(b_{1(0)}))_{(0)})
\\&\quad\n C((\a_D^{-1}\psi_D^{-1}(a_{2[0]})\tl A\a^{-1}(g_{22}) A D\psi(b_{1(1)}))_{[-1]}
\\&\quad(B(h_{12})\tr\b_D^{-1}(b_{2}))_{[-1]})\b^{-1}[\psi^{-1}(h_{21})\omega^{-1}(g_{11})]
\\&\quad\o \psi_D^{-1}((\alpha_D^{-1}\psi_D^{-1}(a_{2[0]})\tl A\alpha^{-1}(g_{22}) A D\psi(b_{1(1)}))_{[0]}
(B(h_{12})\tr\b_D^{-1}(b_{2}))_{[0]})
\\&\quad\n\alpha^{-1}[\psi^{-1}(h_{22})\omega^{-1}(g_{12})]  D((\a_D^{-1}(a_{1})\tl A(g_{21}))_{(1)}
\\&\quad(B C\om(a_{2[-1]})B\beta^{-1}(h_{11})\tr\beta_D^{-1}\om_D^{-1}(b_{1(0)}))_{(1)})
\\&=\om_D^{-1}((\a_D^{-1}(a_{1})\tl A(g_{21}))_{(0)}
(B C\om^{2}(a_{2[-1]})B\b^{-1}\om(h_{11})\tr\beta_D^{-1}\om_D^{-1}(b_{1(0)(0)})))
\\&\quad\n C\a\psi^{-1}(a_{2[0][-1]})[[ C\a^{-1}(B(h_{12})\tr\b_D^{-1}(b_{2}))_{[-1]}\b^{-2}\psi^{-1}(h_{21})]\b^{-1}\omega^{-1}(g_{11})]
\\&\quad\o \psi_D^{-1}((\alpha_D^{-1}\psi_D^{-1}(a_{2[0][0]})\tl A\alpha^{-1}\psi(g_{22}) A D\psi^{2}(b_{1(1)}))
(B(h_{12})\tr\b_D^{-1}(b_{2}))_{[0]})
\\&\quad\n[\alpha^{-1}\psi^{-1}(h_{22})[\alpha^{-2}\omega^{-1}(g_{12})  D\beta^{-1}(\a_D^{-1}(a_{1})\tl A(g_{21}))_{(1)}]]
D\beta\om^{-1}(b_{1(0)(1)})
\\&=\om_D^{-1}((\a_D^{-1}(a_{1})\tl A\psi^{-1}(g_{212}))_{(0)}
(B C\om^{2}(a_{2[-1]})B\b^{-1}\om^{2}(h_{1})\tr\beta_D^{-1}\om_D^{-1}(b_{1(0)(0)})))
\\&\quad\n C\a\psi^{-1}(a_{2[0][-1]})[[ C\a^{-1}(B\om^{-1}(h_{211})\tr\b_D^{-1}(b_{2}))_{[-1]}\b^{-2}\psi^{-2}(h_{212})]\b^{-1}(g_{1})]
\\&\quad\o \psi_D^{-1}((\alpha_D^{-1}\psi_D^{-1}(a_{2[0][0]})\tl A\alpha^{-1}\psi(g_{22}) A D\psi^{2}(b_{1(1)}))
(B\om^{-1}(h_{211})\tr\b_D^{-1}(b_{2}))_{[0]})
\\&\quad\n[\alpha^{-1}\psi^{-1}(h_{22})[\alpha^{-2}\omega^{-2}(g_{211})  D\beta^{-1}(\a_D^{-1}(a_{1})\tl A\psi^{-1}(g_{212}))_{(1)}]]
D\beta\om^{-1}(b_{1(0)(1)})
\\&=\om_D^{-1}((\alpha_D^{-1}(a_{1(0)})\tl A(g_{211}))
(B C\om^{2}(a_{2[-1]})B\b^{-1}\om^{2}(h_{1})\tr\beta_D^{-1}\om_D^{-1}(b_{1(0)(0)})))
\\&\quad\n C\a\psi^{-1}(a_{2[0][-1]})[[\a^{-1}\b^{-1}\om^{-1}\psi^{-1}(h_{211}) C\b^{-1}(b_{2[-1]})]\b^{-1}(g_{1})]
\\&\quad\o \psi_D^{-1}((\alpha_D^{-1}\psi_D^{-1}(a_{2[0][0]})\tl A\alpha^{-1}\psi(g_{22}) A D\psi^{2}(b_{1(1)}))
(B( h_{212}) \tr \b_D^{-1}(b_{2[0]})))
\\&\quad\n[\alpha^{-1}\psi^{-1}(h_{22})[ D\alpha^{-1}(a_{1(1)})\a^{-1}\b^{-1}\om^{-1}\psi^{-1}(g_{212})]]D\beta\om^{-1}(b_{1(0)(1)})
\\&=\om_D^{-1}((\alpha_D^{-1}(a_{1(0)})\tl A\om(g_{12}))
(B C\om^{2}(a_{2[-1]})B\b^{-1}\om(h_{11})\tr\beta_D^{-1}\om_D^{-1}(b_{1(0)(0)})))
\\&\quad\n C\a\psi^{-1}(a_{2[0][-1]})[[\a^{-1}\b^{-1}\psi^{-1}(h_{12}) C\b^{-1}(b_{2[-1]})]\b^{-1}\om^{-1}(g_{11})]
\\&\quad\o \psi_D^{-1}((\alpha_D^{-1}\psi_D^{-1}(a_{2[0][0]})\tl A\alpha^{-1}\psi(g_{22}) A D\psi^{2}(b_{1(1)}))
(B\psi( h_{21}) \tr \b_D^{-1}(b_{2[0]})))
\\&\quad\n[\alpha^{-1}\psi^{-1}(h_{22})[ D\alpha^{-1}(a_{1(1)})\a^{-1}\b^{-1}\om^{-1}(g_{21})]]D\beta\om^{-1}(b_{1(0)(1)})
\end{align*} 
That is, $\D_{D\n H}$ is a BiHom-algebra map. It is straightforward to verify that $\v_{D\n H}$ is also a BiHom-algebra map. Thus $D\n H$ is a BiHom-bialgebra.
\end{proof}
\begin{remark}
(1) If $H$ is a Hom-bialgebra, i.e., $\alpha= \beta= \psi = \omega$, and $m+n=p+q=s+t=u+v=-2$, BiHom-L-R-$\binom{m,n,p,q}{s,t,u,v}$-smash biproduct $D\n H$ becomes Hom-L-R-smash biproduct, which was introduced in \cite{LZ18}. 

(2) If $H$ is a monoidal Hom-bialgebra, i.e., $\psi= \omega =\alpha^{-1} = \beta^{-1}$, and $m-n=\hat{m},p-q=\hat{n},s-t=\hat{p},u-v=\hat{q}$, BiHom-L-R-$\binom{m,n,p,q}{s,t,u,v}$-smash biproduct $D\n H$ becomes L-R-$(\hat{m},\hat{n},\hat{p},\hat{q})$-smash biproduct, which was introduced in \cite{Y26}. 

(3) If $H$ is a bialgebra, i.e., $\alpha= \beta= \psi = \omega=\mathrm{id}$, BiHom-L-R-$\binom{m,n,p,q}{s,t,u,v}$-smash biproduct $D\n H$ becomes L-R-smash biproduct, which was introduced in \cite{PV10}.

(4) If the right action and right coaction are trivial, BiHom-L-R-$\binom{m,n,p,q}{s,t,u,v}$-smash biproduct $D\n H$ becomes BiHom-Radford's $\binom{p,q}{s,t}$-biproduct, the multiplication and comultiplication of $D\n H$ becomes, for all $a,b\in D, h,g\in H$,
\begin{align*}
&(a\sharp h)(b\sharp g)=a(\b^{p}\om^{q}(h_1)\tr\beta_D^{-1}(b))\sharp\psi^{-1}(h_2)g,
\\&\Delta(c\natural h)=c_{1}\n\a^{s}\psi^{t}(c_{2[-1]})\b^{-1}(h_1)\o \psi_D^{-1}(c_{2[0]})\n h_{2}.
\end{align*} 

(5) If $H$ is a Hom-bialgebra, i.e., $\alpha= \beta= \psi = \omega$, and $\a^{p+q}=\theta, \a^{s+t}=\om$, BiHom-Radford's $\binom{p,q}{s,t}$-biproduct $D\n H$ becomes $(\theta, \om)$-twisted Radford's Hom-biproduct, which was introduced in \cite{FK20}. 

(6) If $H$ is a monoidal Hom-bialgebra, i.e., $\psi= \omega =\alpha^{-1} = \beta^{-1}$, and $p-q=m,s-t=n$, BiHom-Radford's $\binom{p,q}{s,t}$-biproduct $D\n H$ becomes Radford's $(m,n)$ biproduct, which was introduced in \cite{MLC20}. 

\end{remark}
\begin{example}
Let \(D=\operatorname{sp}\{1_D,x\}\) over \(\B\) with \(\operatorname{char}\B\neq 2\), for all \(0\neq a,b,c,d\in\B\). 
Define the automorphism $\a_{D},\b_{D},\om_{D},\psi_{D}:D\to D:$
\begin{align*}
& \a_{D}(1_D)=\beta_{D}(1_D)=\om_{D}(1_D)=\psi_{D}(1_D)=1_D, 
\\&\a_{D}(x)=ax,\quad \beta_{D}(x)=bx,\quad  \om_{D}(x)=cx,\quad \psi_{D}(x)=dx.
\end{align*} 
Define the BiHom-algebra structure by
\begin{align*}
1_D1_D=1_D,\quad x1_D=\a_{D}(x)=ax,\quad 1_Dx=\beta_{D}(x)=bx,\quad  x^2=0.
\end{align*} 
Define the BiHom-coalgebra structure by
\begin{align*}
&\Delta(1_D)=1_D\otimes 1_D,\quad \varepsilon(1_D)=1_{\B},
\\&\Delta(x)=cx\otimes 1_D + 1_D\otimes dx,\quad \varepsilon(x)=0.
\end{align*}
 
Let \(H=\operatorname{sp}\{1_H,g\}\) be the group algebra and \(\a,\b,\om,\psi\) an automorphism of H. Then \((H,\c,1_{H},\D,\v,\a,\b,\om,\psi)\) with the following structure map is a BiHom-bialgebra:
\begin{align*}
&\a(1_H)=\beta(1_H)=\om(1_H)=\psi(1_H)=1_H, 
\\&1_H1_H=1_H,\quad g\cdot 1_H=\a(g),\quad 1_H\c g =\b(g),\quad  g\cdot g=1_H,
\\&\Delta(1_H)=1_H\otimes 1_H,\quad \varepsilon(1_H)=1_{\B},
\\&\Delta(g)=\om(g)\otimes\psi(g),\quad \varepsilon(g)=1_{\B}.
\end{align*} 

Define the left action \(\triangleright:H\otimes D\to D\) of H on D by
\begin{align*}
g\triangleright 1_D=1_D,\quad g\triangleright x=bx.
\end{align*} 
It is easy to see that $D$ is a left \(H\)-module BiHom-algebra. Define the right action \(\triangleleft:D\otimes H\to D\) of H on D by
\begin{align*}
1_D\triangleleft g=1_D,\quad x\triangleleft g=ax.
\end{align*} 
It is easy to see that $D$ is a right \(H\)-module BiHom-algebra. Moreover, $D$ is an $H$-bimodule. Define the left coaction \(\rho_L:D\to H\otimes D\) on D by
\begin{align*}
\rho_L(1_D)=1_H\otimes 1_D,\quad \rho_L(x)=\om(g)\otimes \psi_{D}(x).
\end{align*} 
It is easy to see that $D$ is a left \(H\)-comodule BiHom-coalgebra. Define the right coaction \(\rho_R:D\to D\otimes H\) on D by
\begin{align*}
\rho_R(1_D)=1_D\otimes 1_H,\quad \rho_R(x)=\om_{D}(x)\otimes\psi(g).
\end{align*} 
It is easy to see that $D$ is a right \(H\)-comodule BiHom-coalgebra. Moreover, $D$ is an $H$-bicomodule.

Then $D\natural H$ is a L-R-$\binom{m,n,p,q}{s,t,u,v}$-smash biproduct. Its multiplication is defined as follows:
\begin{align*}
\begin{tabular}{c|c c c c}
	                         &$1_{D}\n 1_{H}$ &$1_{D}\n g$ &$x\n 1_{H}$  &$x\n g$\\
	\hline $1_{D}\n 1_{H}$   &$1_{D}\n 1_{H}$ &$1_{D}\n \b(g)$ &$bx\n 1_{H}$ &$bx\n \b(g)$\\
	       $1_{D}\n g$       &$1_{D}\n \a(g)$ &$1_{D}\n 1_{H}$ &$bx\n \a(g)$     &$bx\n 1_{H}$\\
	        $x\n 1_{H} $     &$ax\n 1_{H}$ &$ax\n \b(g)$&0&0\\
           $x\n g $          &$ax\n \a(g)$ &$ax\n 1_{H}$&0&0\\
\end{tabular}
\end{align*}

Its comultiplication and counit are defined as follows:
\begin{align*}
&\D(1_{D}\n 1_{H})=(1_{D}\n 1_{H})\o (1_{D}\n 1_{H}),\quad \v(1_{D}\n 1_{H})=1_{\B},
\\&\D(1_{D}\n g)=(1_{D}\n \om(g))\o (1_{D}\n \psi(g)),\quad \v(1_{D}\n g)=1_{\B},
\\&\D(x\n 1_{H})=(cx\n 1_{H})\o (1_{D}\n \b^{u+1}\om^{v+1}\psi(g))+(1_{D}\n \a^{s+1}\om\psi^{t+1}(g))\o (dx\n 1_{H}),\quad \v(x\n 1_{H})=0,
\\&\D(x\n g)=(cx\n\om(g))\o (1_{D}\n g)+(1_{D}\n 1_{H})\o (dx\n \psi(g)),\quad \v(x\n g)=0.
\end{align*}
\end{example}

\section{BiHom-$\binom{m,n,p,q}{s,t,u,v}$-Yetter-Drinfel'd-Long bimodules category}
\def\theequation{4.\arabic{equation}}
\setcounter{equation} {0}
In this section, we introduce the concept of BiHom-$\binom{m,n,p,q}{s,t,u,v}$-Yetter-Drinfel'd-Long bimodule, $m,n,p,q,s,t,u,v\in \mathbb{Z}$, and prove the category $\mathcal{LR}(H)\binom{m,n,p,q}{s,t,u,v}$ of BiHom-$\binom{m,n,p,q}{s,t,u,v}$-Yetter-Drinfel'd-Long bimodules is a strict braided monoidal category. Moreover, we obtain that $D\n H$ is a BiHom-L-R-$\binom{m,n,p,q}{s,t,u,v}$-smash biproduct bialgebra if and only if $D$ is a BiHom-bialgebra in $\mathcal{LR}(H)\binom{m,n,p,q}{s,t,u,v}$.

Let $H$ be a BiHom-bialgebra. We will describe a strict braided monoidal category associated to $H$, denote by $\mathcal{LR}(H)\binom{m,n,p,q}{s,t,u,v}$. The objects of $\mathcal{LR}(H)\binom{m,n,p,q}{s,t,u,v}$ are vector spaces $M$ endowed with $H$-bimodule and $H$-bicomodule structures (denoted by $h\o m\m h\tr m, m\o h\m m\tl h, m\m m_{[-1]}\o m_{[0]}, m\m m_{(0)}\o m_{(1)}$, for all $m\in M, h\in H$), such that for all $m\in M, h\in H$
\begin{align}
&(\a^{s}\b^{p+1}\om^{q}\psi^{t+1}(h_1)\tr m)_{[-1]}\a\b^{-1}\psi^{-1}\om(h_{2})\o (\a^{s}\b^{p+1}\om^{q}\psi^{t+1}(h_1)\tr m)_{[0]}\notag
\\&\qquad\qquad\quad=h_{1}\a(m_{[-1]})\o \a^{s}\b^{p+1}\om^{q+1}\psi^{t+1}( h_{2}) \tr m_{[0]},\label{e4.1}
\\&(h\tr m)_{(0)}\o(h\tr m)_{(1)}=(\om_{H}(h)\tr m_{(0)})\o \b_{H}(m_{(1)}),\label{e4.2}
\\&[m\tl \a^{m+1}\b^{u}\om^{v+1}\psi^{n}(g_2)]_{(0)}\o\alpha^{-1}\b\psi\omega^{-1}(g_{1})[m\tl \a^{m+1}\b^{u}\om^{v+1}\psi^{n}(g_2)]_{(1)}\notag
\\&\qquad\qquad\quad=m_{(0)}\tl \a^{m+1}\b^{u}\om^{v+1}\psi^{n+1}(g_{1})\o\b(m_{(1)})g_{2},\label{e4.3}
\\&([m\tl g]_{[-1]})\o [m\tl g]_{[0]}=\a_{H}(m_{[-1]})\o m_{[0]}\tl \psi_{H}( g).\label{e4.4}
\end{align}
The morphism in $\mathcal{LR}(H)\binom{m,n,p,q}{s,t,u,v}$ are $H$-bilinear and $H$-bicolinear maps.
\begin{remark}
$(1)$ Eq. $(\ref{e4.1})$ gives the compatible condition for left-left BiHom-$\binom{s,t}{p,q}$-Yetter-Drinfel'd modules, whose category is denoted by ${}^{H}_{H}\mathcal{YD}\binom{s,t}{p,q}$; Eq. $(\ref{e4.3})$ gives the compatible condition for right-right BiHom-$\binom{m,n}{u,v}$-Yetter-Drinfel'd modules , whose category is denoted by $\mathcal{YD}\binom{m,n}{u,v}^{H}_{H}$; 
Eqs. $(\ref{e4.2})$ and $(\ref{e4.4})$ provide the compatible conditions for left-right and right-left BiHom-Long modules , respectively.

$(2)$ If $H$ be a BiHom-Hopf algebra. Eqs. $(\ref{e4.1})$ and $(\ref{e4.3})$ are equivalent to the following forms, respectively.
\begin{align*}
&(\a^{s}\b^{p+1}\om^{q}\psi^{t+1}(h)\tr m)_{[-1]}\o (\a^{s}\b^{p+1}\om^{q}\psi^{t+1}(h)\tr m)_{[0]}
\\&\qquad\qquad =[\a^{-2}\om^{-2}(h_{11})\a^{-1}(m_{[-1]})]\a^{-1}\om^{-1} S(h_{2})\o \a^{s}\b^{p+1}\om^{q-1}\psi^{t+1}( h_{12}) \tr m_{[0]}
\\&(m\tl \a^{m+1}\b^{u}\om^{v+1}\psi^{n}(h))_{(0)}\o (m\tl  \a^{m+1}\b^{u}\om^{v+1}\psi^{n}(h))_{(1)}
\\&\qquad\qquad=m_{(0)}\tl\a^{m+1}\b^{u}\om^{v+1}\psi^{n-1}(h_{21})\o \b^{-1}\psi^{-1} S(h_{1})(\b^{-1}(m_{(1)})\b^{-2}\psi^{-2}(h_{22})),
\end{align*}
see \cite{ZWC24} for detail.
\end{remark}

\begin{example}
Let $H$ be a BiHom-Hopf algebra. Then

$(1)$ $H_{1}=H\o H$ is a BiHom-$\binom{m,n,p,q}{s,t,u,v}$-Yetter-Drinfel'd-Long bimodule with the following structures, for any $h, k, l\in H $:
\begin{align*}
h\tr (k\o l)&=h k\o \b (l),
\\(k\o l)\tl h&=\a (k)\o S\a^{-m-3}\b^{-u+1}\om^{-v-3}\psi^{-n}(h_{1})(\b^{-1}(l) \a^{-m-3}\b^{-u}\om^{-v-3}\psi^{-n}(h_{2})),
\\ \r_{L}(k\o l)&=(k\o l)_{[-1]}\o (k\o l)_{[0]}
\\&=\a^{-s}\b^{-p-3}\om^{-q}\psi^{-t-3}(k_{11})S\a^{-s}\b^{-p-3}\om^{-q+1}\psi^{-t-3}(k_{2})\o (\om^{-1} (k_{12})\o \psi(l)),
\\ \r_{R}(k\o l)&=(k\o l)_{(0)}\o (k\o l)_{(1)}=(\om(k)\o l_{1})\o l_{2}.
\end{align*}

$(2)$ $H_{2}=H\o H$ is a BiHom-$\binom{m,n,p,q}{s,t,u,v}$-Yetter-Drinfel'd-Long bimodule with the following structures, for any $h, k, l\in H $:
\begin{align*}
h\tr (k\o l) &=(\a^{-s}\b^{-p-3}\om^{-q}\psi^{-t-3}(h_{1})\a^{-1}(k))S\a^{-s+1}\b^{-p-3}\om^{-q}\psi^{-t-3}(h_{2})\o \b (l),
\\(k\o l)\tl h&= \a (k)\o lh,
\\ \r_{L}(k\o l)&=(k\o l)_{[-1]}\o (k\o l)_{[0]}=k_{1} \o (k_{2}\o \psi(l)),
\\ \r_{R}(k\o l)&=(k\o l)_{(0)}\o (k\o l)_{(1)}
\\&=(\om(k)\o \psi^{-1}(l_{21}))\o S\a^{-m-3}\b^{-u}\om^{-v-3}\psi^{-n+1}(l_{1})\a^{-m-3}\b^{-u}\om^{-v-3}\psi^{-n}(l_{22}).
\end{align*}
Note that $H\o H$ is also a BiHom-Hopf algebra with usual tensor product and usual tensor coproduct.
\end{example}

One can verify that $\mathcal{LR}(H)\binom{m,n,p,q}{s,t,u,v}$ is a strict monoidal category with the following structures: for all $M, N\in \mathcal{LR}(H)\binom{m,n,p,q}{s,t,u,v}$, and $m\in M,n\in N,h\in H$,
\begin{align*}
&(m\o n)\tl h=m\tl h_{1}\o n\tl h_{2}
\\&\r^{r}(m\o n)=m_{(0)}\o n_{(0)}\o \alpha^{-1}\omega^{-1}(m_{(1)})\b^{-1}\psi^{-1}(n_{(1)})
\\&h\tr (m\o n)=h_{1}\tr m \o h_{2}\tr n
\\&\r^{l}(m\o n)=\alpha^{-1}\omega^{-1}(m_{[-1]})\b^{-1}\psi^{-1}(n_{[-1]})\o  m_{[0]}\o n_{[0]}
\end{align*}
\begin{theorem}
The category $\mathcal{LR}(H)\binom{m,n,p,q}{s,t,u,v}$ is prebraided monoidal category with prebraiding
\begin{align*}
c_{M,N}:M\otimes N\to N\otimes M: m\otimes n\mapsto \a^{s}&\b^{p+1}\om^{q}\psi^{t+1}(m_{[-1]})\tr \b_{N}^{-1}\psi_{N}^{-1}(n_{(0)})
\\&\otimes \a^{-1}_{M}\om_{M}^{-1}(m_{[0]})\tl \a^{m+1}\b^{u}\om^{v+1}\psi^{n}(n_{(1)})
\end{align*}
Moreover, if $H$ is a BiHom-Hopf algebra with a bijective antipode $S$, $\mathcal{LR}(H)\binom{m,n,p,q}{s,t,u,v}$ becomes braided with the inverse of $c_{M,N}$ given by
\begin{align*}
c_{M,N}^{-1}: N\otimes M\to M\otimes N: n\otimes m\mapsto \a_{M} ^{-1}&\om_{M} \psi_{M} ^{-2}(m_{[0]})\tl S^{-1}\a^{m}\b^{u+1}\om^{v}\psi^{n+1}(n_{(1)})
\\&\o S^{-1}\a^{s+1}\b^{p}\om^{q+1}\psi^{t} (m_{[-1]})\tr \b_{N}^{-1}\om_{N} ^{-2}\psi_{N} (n_{(0)})
\end{align*}
for all $M, N\in \mathcal{LR}(H)\binom{m,n,p,q}{s,t,u,v}$, and $m\in M,n\in N,h\in H$.
\end{theorem}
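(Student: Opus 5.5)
The proof splits into three verifications, carried out in this order. First, $c_{M,N}$ is a morphism in $\mathcal{LR}(H)\binom{m,n,p,q}{s,t,u,v}$. Second, $c$ is natural and satisfies the two hexagon identities, which in the strict setting read
\[c_{U,V\otimes W}=(\mathrm{id}_V\otimes c_{U,W})\circ(c_{U,V}\otimes \mathrm{id}_W),\qquad c_{U\otimes V,W}=(c_{U,W}\otimes \mathrm{id}_V)\circ(\mathrm{id}_U\otimes c_{V,W}).\]
Third, when $S$ is bijective, the displayed map is a two-sided inverse of $c_{M,N}$.

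Naturality is immediate. A morphism $f\colon M\to M'$ is $H$-bilinear, $H$-bicolinear and commutes with $\a_M,\b_M,\om_M,\psi_M$, so $f$ can be moved through every factor in the formula for $c$.

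For the hexagons, I would expand $c_{U,V\otimes W}(u\otimes v\otimes w)$ using the tensor product structures of the monoidal category. The right coaction of $V\otimes W$ produces the product $\a^{-1}\om^{-1}(v_{(1)})\b^{-1}\psi^{-1}(w_{(1)})$, and the right action of $U$ by a product is split by the BiHom-module axiom $\a(h)\tr(g\tr m)=(hg)\tr\b_M(m)$, in its right-hand version. The left action of $u_{[-1]}$ on $v\otimes w$ splits through $\Delta$, and BiHom-coassociativity of the left coaction of $U$ rewrites $u_{[-1]1}\otimes u_{[-1]2}\otimes u_{[0]}$ as $\om(u_{[-1]})\otimes u_{[0][-1]}\otimes\psi_U^{-1}(u_{[0][0]})$, up to the structure maps. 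The key input is the right-left Long condition (\ref{e4.4}): it lets the coaction $[-1]$ of $U$ pass through the right action of $H$, which is exactly what produces the second factor of $(\mathrm{id}\otimes c_{U,W})\circ(c_{U,V}\otimes\mathrm{id})$. The second hexagon is handled symmetrically, using (\ref{e4.2}) together with the right comodule coassociativity. In both cases the powers of $\a,\b,\om,\psi$ are matched using the commutation relations and multiplicativity of the structure maps.

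The step I expect to be the main obstacle is showing that $c_{M,N}$ is left and right $H$-linear and colinear. Left linearity, $c(h\tr(m\otimes n))=h\tr c(m\otimes n)$, needs the Yetter–Drinfel'd condition (\ref{e4.1}) on $M$ to move $h_1$ past $m_{[-1]}$, and the Long condition (\ref{e4.2}) on $N$ to commute the left action with the right coaction. Right linearity uses (\ref{e4.3}) and (\ref{e4.4}) in the same way, and colinearity uses the same four identities read in the other direction. The exponents $\a^{s}\b^{p+1}\om^{q}\psi^{t+1}$ and $\a^{m+1}\b^{u}\om^{v+1}\psi^{n}$ in $c$ are chosen precisely so that these identities apply verbatim. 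The practical difficulty is bookkeeping: before invoking (\ref{e4.1}), one must normalize each expression so that the acting element has exactly the form $\a^{s}\b^{p+1}\om^{q}\psi^{t+1}(h_1)$. I would do this by rewriting with $\Delta\circ\om=(\om\otimes\om)\circ\Delta$ and its analogues, and by moving the structure maps from elements of $H$ onto the module elements via the compatibility $\a(h)\tr\a_M(m)=\a_M(h\tr m)$ and its analogues.

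Finally, for invertibility I would compute $c^{-1}\circ c$ directly. After substituting, the right coaction of $\a_M^{-1}\om_M^{-1}(m_{[0]})\tl(\cdots)$ is expanded with (\ref{e4.3}), and the left coaction of the action term with (\ref{e4.1}). The $S^{-1}$-identities of Proposition 2.x(2), in particular $S^{-1}\a^{-2}\b^{2}(a_2)a_1=\varepsilon(a)1_H$ and $a_2S^{-1}\a^{2}\b^{-2}(a_1)=\varepsilon(a)1_H$, then collapse the resulting pairs to counits. The counit axioms turn the leftover $\om,\psi$ powers into identities, giving $m\otimes n$. The composite $c\circ c^{-1}$ is computed in the same manner. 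This is the longest computation, but it is mechanical once linearity is established.
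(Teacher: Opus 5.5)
Your treatment of naturality, linearity, colinearity and the hexagons agrees with the paper. For left linearity you use exactly the paper's pair, $(\ref{e4.2})$ on $N$ and $(\ref{e4.1})$ on $M$, and your hexagon sketch via $(\ref{e4.4})$ and $(\ref{e4.2})$ fills in what the paper calls straightforward.

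\textbf{The gap is in the invertibility step: it rests on the wrong compatibility conditions.} Put $c_{M,N}(m\otimes n)=x\otimes y$, where $x=\alpha^{s}\beta^{p+1}\omega^{q}\psi^{t+1}(m_{[-1]})\triangleright\beta_N^{-1}\psi_N^{-1}(n_{(0)})\in N$ and $y=\alpha_M^{-1}\omega_M^{-1}(m_{[0]})\triangleleft\alpha^{m+1}\beta^{u}\omega^{v+1}\psi^{n}(n_{(1)})\in M$. The formula for $c^{-1}_{M,N}$ evaluated at $x\otimes y$ involves only $y_{[-1]}\otimes y_{[0]}$ and $x_{(0)}\otimes x_{(1)}$. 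These are the \emph{left} coaction of a right-action term and the \emph{right} coaction of a left-action term, and the same holds for $c\circ c^{-1}$. They are governed by the Long conditions $(\ref{e4.4})$ and $(\ref{e4.2})$. The right coaction of $y$ and the left coaction of $x$, which is what $(\ref{e4.3})$ and $(\ref{e4.1})$ compute, never occur. So the step as you describe it cannot be carried out, and the Yetter--Drinfel'd conditions play no role in invertibility.

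What the paper does instead: by $(\ref{e4.4})$ and $(\ref{e4.2})$,
\begin{align*}
y_{[-1]}\otimes y_{[0]}&=\omega^{-1}(m_{[0][-1]})\otimes\alpha_M^{-1}\omega_M^{-1}(m_{[0][0]})\triangleleft\alpha^{m+1}\beta^{u}\omega^{v+1}\psi^{n+1}(n_{(1)}),\\
x_{(0)}\otimes x_{(1)}&=\alpha^{s}\beta^{p+1}\omega^{q+1}\psi^{t+1}(m_{[-1]})\triangleright\beta_N^{-1}\psi_N^{-1}(n_{(0)(0)})\otimes\psi^{-1}(n_{(0)(1)}).
\end{align*}
Substitute these into $c^{-1}_{M,N}$. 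The two right actions on $m_{[0][0]}$ merge by the right module axiom, and the two left actions on $n_{(0)(0)}$ merge by the left one. Next, coassociativity of the right coaction of $N$ turns $n_{(0)(1)},n_{(1)}$ into a coproduct pair $n_{(1)1},n_{(1)2}$. Likewise, coassociativity of the left coaction of $M$ turns $m_{[-1]},m_{[0][-1]}$ into $m_{[-1]1},m_{[-1]2}$. Only then do the $S^{-1}$-identities you cite collapse each pair to a counit.

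This computation does not depend on linearity having been established, contrary to your closing remark. If you replace $(\ref{e4.3})$ and $(\ref{e4.1})$ by $(\ref{e4.4})$ and $(\ref{e4.2})$ and insert the coassociativity of the two coactions, your outline becomes the paper's proof.
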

\begin{proof} Let $A=\a^{m}\psi^{n},B=\b^{p}\om^{q},C=\a^{s}\psi^{t},D=\b^{u}\om^{v}$,
first, for all $M, N\in \mathcal{LR}(H)\binom{m,n,p,q}{s,t,u,v}$, and $m\in M,n\in N,h\in H$,
\begin{align*}
&c_{M,N}(h\tr (m\o n))
\\&=c_{M,N}(h_{1}\tr m \o h_{2}\tr n)
\\&=BC\b\psi((h_{1}\tr m )_{[-1]})\tr \b_{N}^{-1}\psi_{N}^{-1}((h_{2}\tr n)_{(0)})
\otimes \a^{-1}_{M}\om_{M}^{-1}((h_{1}\tr m )_{[0]})\tl AD\a\om((h_{2}\tr n)_{(1)})
\\&=BC\b\psi((h_{1}\tr m )_{[-1]})\tr \b_{N}^{-1}\psi_{N}^{-1}[\om(h_{2})\tr n_{(0)}]
\otimes \a^{-1}_{M}\om_{M}^{-1}((h_{1}\tr m )_{[0]})\tl AD\a\b\om(n_{(1)})
\\&=BC\a^{-1}\b\psi[((h_{1}\tr m )_{[-1]})B^{-1}C^{-1}\a\b^{-2}\psi^{-2}\om (h_{2})]\tr \psi_{N}^{-1}(n_{(0)})
\\&\quad\otimes \a^{-1}_{M}\om_{M}^{-1}((h_{1}\tr m )_{[0]})\tl AD\a\b\om(n_{(1)})
\\&=BC\a^{-1}\b\psi[B^{-1}C^{-1}\b ^{-1}\psi ^{-1}(h_{1})\a (m_{[-1]})]\tr \psi_{N}^{-1}(n_{(0)})
\\&\quad\otimes \a^{-1}_{M}\om_{M}^{-1}(\om ( h_{2}) \tr m_{[0]})\tl AD\a\b\om(n_{(1)})
\\&=h_{1}\tr [BC\b\psi(m_{[-1]})\tr \b_{N}^{-1}\psi_{N}^{-1}(n_{(0)})]
\otimes h_{2} \tr [\a^{-1}_{M}\om_{M}^{-1}m_{[0]}\tl AD\a\om(n_{(1)})]
\\&=h\tr c_{M,N}(m\o n),
\end{align*} 
thus $c_{M,N}$ is left $H$-linear. Similarly, one can verify that $c_{M,N}$ is right $H$-linear.
\begin{align*}
&c_{M,N}(m\o n)_{[-1]}\o c_{M,N}(m\o n)_{[0]}
\\&=\alpha ^{-1}\omega ^{-1}([BC\b \psi (m_{[-1]})\tr \b_{N} ^{-1}\psi_{N} ^{-1}(n_{(0)})]_{[-1]})
\b ^{-1}\psi ^{-1}([\a_{M} ^{-1}\om _{M}^{-1}(m_{[0]})\tl AD\a \om (n_{(1)})]_{[-1]})
\\&\quad\o  [BC\b \psi (m_{[-1]})\tr \b_{N} ^{-1}\psi_{N} ^{-1}(n_{(0)})]_{[0]}
\o [\a _{M}^{-1}\om_{M} ^{-1}(m_{[0]})\tl AD\a \om (n_{(1)})]_{[0]}
\\&=\alpha ^{-1}\omega ^{-1}([BC\b \psi (m_{[-1]})\tr \b_{N} ^{-1}\psi_{N} ^{-1}(n_{(0)})]_{[-1]})
\b ^{-1}\psi ^{-1}\om ^{-1}(m_{[0][-1]})
\\&\quad\o  [BC\b \psi (m_{[-1]})\tr \b_{N} ^{-1}\psi_{N} ^{-1}(n_{(0)})]_{[0]}
\o \a_{M} ^{-1}\om_{M} ^{-1}(m_{[0][0]})\tl AD\a \om \psi(n_{(1)})
\\&=\alpha ^{-1}\omega ^{-1}([BC\b \om^{-1}\psi (m_{[-1]1})\tr \b _{N}^{-1}\psi_{N} ^{-1}(n_{(0)})]_{[-1]}\a\b ^{-1}\psi ^{-1}(m_{[-1]2}))
\\&\quad\o  [BC\b \psi (m_{[-1]})\tr \b_{N} ^{-1}\psi_{N} ^{-1}(n_{(0)})]_{[0]}
\o \a_{M} ^{-1}\om_{M} ^{-1}\psi_{M} (m_{[0]})\tl AD\a \om \psi(n_{(1)})
\\&=\alpha ^{-1}\omega ^{-1}(\om^{-1}(m_{[-1]1})\a \b ^{-1}\psi ^{-1}(n_{(0)[-1]}))
\\&\quad\o  BC\b \psi  (m_{[-1]2}) \tr \b_{N} ^{-1}\psi_{N} ^{-1}(n_{(0)[0]})
\o \a_{M} ^{-1}\om_{M} ^{-1}\psi_{M}(m_{[0]})\tl AD\a \om \psi(n_{(1)})
\\&=\alpha ^{-1}\omega ^{-1}(m_{[-1]})\b ^{-1}\psi ^{-1}(n_{[-1]})
\\&\quad\o BC\b \psi (m_{[0][-1]})\tr \b_{N} ^{-1}\psi_{N} ^{-1}(n_{[0](0)})
\otimes \a_{M} ^{-1}\om_{M} ^{-1}(m_{[0][0]})\tl AD\a \om (n_{[0](1)})
\\&=\alpha ^{-1}\omega ^{-1}(m_{[-1]})\b ^{-1}\psi ^{-1}(n_{[-1]})\o c_{M,N}[m_{[0]}\o n_{[0]}]
\\&=(m\o n)_{[-1]}\o c_{M,N}[(m\o n)_{[0]}]
\end{align*} 
thus $c_{M,N}$ is left $H$-colinear. Similarly, one can verify that $c_{M,N}$ is right $H$-colinear.

It is straightforward to check that $c_{M,N}$ satisfies the hexagonal equations. Hence, it is a prebraiding in $\mathcal{LR}(H)\binom{m,n,p,q}{s,t,u,v}$. Furthermore, if $H$ has bijective antipode $S$,
\begin{align*}
&c_{M,N}^{-1}\circ c_{M,N}(m\o n)
\\&=\a_{M}^{-1}\om_{M}\psi_{M} ^{-2}([\a_{M} ^{-1}\om_{M}^{-1}(m_{[0]})\tl AD\a \om (n_{(1)})]_{[0]})
\\&\quad\tl S^{-1}AD\b\psi([BC\b \psi (m_{[-1]})\tr \b_{N} ^{-1}\psi_{N} ^{-1}(n_{(0)})]_{(1)})
\\&\quad \o S^{-1}BC\a\om([\a _{M}^{-1}\om_{M} ^{-1}(m_{[0]})\tl AD\a \om (n_{(1)})]_{[-1]})
\\&\quad\tr \b_{N}^{-1}\om_{N} ^{-2}\psi_{N}([BC\b \psi (m_{[-1]})\tr \b_{N} ^{-1}\psi_{N} ^{-1}(n_{(0)})]_{(0)})
\\&=\a_{M}^{-1}\om_{M}\psi_{M} ^{-2}[\a_{M} ^{-1}\om_{M} ^{-1}(m_{[0][0]})\tl AD\a \psi\om (n_{(1)})]
\tl S^{-1}AD\b(n_{(0)(1)})
\\&\quad \o S^{-1}BC\a(m_{[0][-1]})
\tr \b_{N}^{-1}\om_{N} ^{-2}\psi_{N}[BC\b \psi \om(m_{[-1]})\tr \b_{N} ^{-1}\psi_{N} ^{-1}(n_{(0)(0)})]
\\&=\a_{M}^{-1}\psi_{M} ^{-2}(m_{[0][0]})
\tl [AD\om^{2} \psi^{-1}(n_{(1)}) S^{-1}AD(n_{(0)(1)})]
\\&\quad \o [S^{-1}BC(m_{[0][-1]})BC\psi^{2} \om^{-1}(m_{[-1]})]
\tr\b_{N}^{-1}\om_{N} ^{-2}(n_{(0)(0)})
\\&=\a_{M}^{-1}\psi_{M} ^{-1}(m_{[0]})
\tl [AD\om^{2}\psi^{-2}(n_{(1)2}) S^{-1}AD(n_{(1)1})]
\\&\quad \o [S^{-1}BC(m_{[-1]2})BC\psi^{2}\om ^{-2}(m_{[-1]1})]
\tr\b_{N}^{-1}\om_{N} ^{-1}(n_{(0)})
\\&=m\o n,
\end{align*} 
thus $c_{M,N}^{-1}\circ c_{M,N}=id_{U\o V}$. Similarly, we have $c_{M,N}\circ c_{M,N}^{-1}=id_{V\o U}$. Then $c_{M,N}$ is invertible, thus it is a braiding.

The proof is completed.
\end{proof}
\begin{definition}
Let $H$ be a BiHom-bialgebra, $D$ a vector space and $\a_{D},\b_{D},\om_{D},\psi_{D}\in Aut(D)$. We say that $D$ is a BiHom-bialgebra in $\mathcal{LR}(H)\binom{m,n,p,q}{s,t,u,v}$ if the following axioms hold:
\begin{enumerate}
\item[$(i)$] $D$ is a Yetter-Drinfel'd-Long bimodule.
\item[$(ii)$] $(D,\mu_{D},1_{D},\a_{D},\b_{D})$ is a BiHom-algebra in $\mathcal{LR}(H)\binom{m,n,p,q}{s,t,u,v}$, i.e., $D$ is a $H$-bimodule BiHom-algebra and $D$ is a $H$-bicomodule BiHom-algebra.
\item[$(iii)$] $(D,\Delta_{D},\varepsilon_{D},\om_{D},\psi_{D})$ is a BiHom-coalgebra in $\mathcal{LR}(H)\binom{m,n,p,q}{s,t,u,v}$, i.e., $D$ is a $H$-bimodule BiHom-coalgebra and $D$ is a $H$-bicomodule BiHom-coalgebra.
\item[$(iv)$] We have the following identities:
\begin{enumerate}
\item[$(iv$-$1)$] $\v_{D}(1_{D})=1, \v_{D}(ab)=\v_{D}(a)\v_{D}(b),$
\item[$(iv$-$2)$] $\D_{D}(1_{D})=1_{D}\o 1_{D},$
\item[$(iv$-$3)$] $\Delta_D(ab) = a_{1}(\a^{s+1}\b^{p}\om^{p+1}\psi^{t}(a_{2[-1]})\tr\beta_D^{-1}\om_D^{-1}(b_{1(0)}))\notag 
\\  \o (\alpha_D^{-1}\psi_D^{-1}(a_{2[0]})\tl \a^{m}\b^{u+1}\om^{v}\psi^{n+1}(b_{1(1)}))b_{2},
$
\item[$(iv$-$3)$] $\a_{D},\b_{D}$ are BiHom-coalgebra maps, $\om_{D},\psi_{D}$ are BiHom-algebra maps, and they commute with each other.
\end{enumerate}
\end{enumerate}
\end{definition}
\begin{proposition}
Let $H$ be a BiHom-bialgebra, $D$ a vector space and $\a_{D},\b_{D},\om_{D},\psi_{D}\in Aut(D)$. Then $(H,D)$ is an L-R-$\binom{m,n,p,q}{s,t,u,v}$-admissible pair if and only if $D$ is a BiHom-bialgebra in $\mathcal{LR}(H)\binom{m,n,p,q}{s,t,u,v}$ satisfying the condition $(\ref{e3.12})$.
\end{proposition}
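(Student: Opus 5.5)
This is essentially a bookkeeping argument: I will compare the list of conditions defining an L-R-$\binom{m,n,p,q}{s,t,u,v}$-admissible pair with the axioms $(i)$--$(iv)$ of a BiHom-bialgebra in $\mathcal{LR}(H)\binom{m,n,p,q}{s,t,u,v}$ together with $(\ref{e3.12})$, and match them one by one. Both sides share some standing hypotheses: $D$ is an $H$-bimodule BiHom-algebra and an $H$-bicomodule BiHom-coalgebra, and $\a_D,\b_D,\om_D,\psi_D$ are commuting automorphisms with $\a_D,\b_D$ coalgebra maps and $\om_D,\psi_D$ algebra maps. These appear verbatim on both sides.

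\emph{Direction ``$\Rightarrow$''.} The last four conditions of the admissible-pair definition (the Yetter–Drinfel'd–Long compatibilities) coincide literally with $(\ref{e4.1})$--$(\ref{e4.4})$. Hence $D$ is an object of $\mathcal{LR}(H)\binom{m,n,p,q}{s,t,u,v}$, which gives $(i)$. For $(ii)$, we need $\mu_D$ and $1_D$ to be morphisms in the category. Left and right $H$-linearity of $\mu_D$, with $H$ acting on $D\o D$ diagonally, is the $H$-bimodule BiHom-algebra hypothesis. Colinearity of $\mu_D$ with respect to the coactions $\r^l,\r^r$ on $D\o D$ is exactly the multiplicativity conditions on $\r_L$ and $\r_R$. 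The unit conditions come from $\r_R(1_D)=1_D\o 1_H$, $\r_L(1_D)=1_H\o 1_D$ and $h\tr 1_D=\v(h)1_D$. Dually, $(iii)$ comes from the conditions $\D_D(h\tr a)=h_1\tr a_1\o h_2\tr a_2$ and $\D_D(a\tl h)=a_1\tl h_1\o a_2\tl h_2$, the counit conditions $\v_D(h\tr a)=\v_D(a\tl h)=\v_H(h)\v_D(a)$, and the $H$-bicomodule BiHom-coalgebra hypothesis. Finally, $(iv)$ is read off from the conditions on $\v_D(1_D)$, $\v_D(ab)$, $\D_D(1_D)$ and $\D_D(ab)$, and $(\ref{e3.12})$ is assumed on both sides.

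\emph{Direction ``$\Leftarrow$''.} Here I reverse the dictionary. Each admissible-pair condition is extracted from the corresponding categorical statement. For instance, the equality $\r_L\circ\mu_D=(\mu_H\o\mu_D)\circ(\mathrm{id}\o\tau\o\mathrm{id})\circ(\r_L\o\r_L)$, after unwinding the tensor-product coaction $\r^l(m\o n)=\alpha^{-1}\omega^{-1}(m_{[-1]})\b^{-1}\psi^{-1}(n_{[-1]})\o m_{[0]}\o n_{[0]}$, must be rewritten as the condition $\r_L(ab)=\r_L(a)\r_L(b)$ stated in the definition.

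\emph{Main obstacle.} The one nontrivial step is the compatibility between the structure twists on $D\o D$ in the monoidal category and the untwisted form in which the multiplicativity of $\r_L,\r_R$ and the $\D_D$-linearity conditions are written in the admissible-pair definition. The tensor product structures carry factors like $\alpha^{-1}\omega^{-1}$ and $\b^{-1}\psi^{-1}$, while the admissible-pair conditions are stated with the plain product. I would settle this convention first, by interpreting ``$D$ is an $H$-bicomodule BiHom-algebra'' in $(ii)$ exactly as the categorical morphism condition. Once that is fixed, both directions reduce to identical lists of equations, and the proof is a matter of matching them.
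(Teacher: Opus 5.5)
Your strategy of matching the two lists of axioms is the paper's intended argument (its proof is the single sentence ``a straightforward verification''). The literal matches you list are correct: the counit conditions, $\D_D(1_D)$, $\D_D(ab)$, the four Yetter--Drinfel'd--Long conditions $(\ref{e4.1})$--$(\ref{e4.4})$, and $(\ref{e3.12})$. The problem is the step you single out as the main obstacle. Your diagnosis of it is partly wrong, and your proposed fix does not produce identical lists.

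In the paper's monoidal structure only the coactions on $M\o N$ carry the twists $\a^{-1}\om^{-1}$, $\b^{-1}\psi^{-1}$. The actions are the plain diagonal ones, $h\tr(m\o n)=h_1\tr m\o h_2\tr n$. Two consequences follow.
\begin{itemize}
\item $H$-linearity of $\D_D$ is literally $\D_D(h\tr a)=h_1\tr a_1\o h_2\tr a_2$, so there is no obstacle there.
\item $H$-linearity of $\mu_D$ reads $h\tr(ab)=(h_1\tr a)(h_2\tr b)$. This is \emph{not} the paper's module BiHom-algebra axiom $h\tr(ab)=(\a^{-1}\om^{-1}(h_1)\tr a)(\b^{-1}\psi^{-1}(h_2)\tr b)$. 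So your claim that linearity of $\mu_D$ ``is the $H$-bimodule BiHom-algebra hypothesis'' is false.
\end{itemize}
Colinearity of $\mu_D$ for the tensor coaction has the same problem. It reads $\r_L(ab)=\a^{-1}\om^{-1}(a_{[-1]})\b^{-1}\psi^{-1}(b_{[-1]})\o a_{[0]}b_{[0]}$. This is a genuinely different equation from $\r_L(ab)=a_{[-1]}b_{[-1]}\o a_{[0]}b_{[0]}$ in (3.4), and no unwinding converts one into the other. Put $b=1_D$ and use $\r_L(1_D)=1_H\o 1_D$, $a1_D=\a_D(a)$ and $\r_L\circ\a_D=(\a\o\a_D)\circ\r_L$. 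The twisted version then forces $\om^{-1}(a_{[-1]})\o a_{[0]}=\a(a_{[-1]})\o a_{[0]}$, whereas the untwisted one is automatically consistent.

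Your ``$\Leftarrow$'' example also conflates the two. The equality $\r_L\circ\mu_D=(\mu_H\o\mu_D)\circ(\mathrm{id}\o\tau\o\mathrm{id})\circ(\r_L\o\r_L)$ already is (3.4) and involves no tensor coaction. The categorical condition is $\r_L\circ\mu_D=(\mathrm{id}_H\o\mu_D)\circ\r^{l}_{D\o D}$.

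So reading (ii) as the categorical morphism condition, as you propose, makes the two sides differ rather than coincide, and the matching argument breaks. It goes through only with the opposite choice, which the paper's definition dictates through its ``i.e.'' clauses:
\begin{itemize}
\item read ``$D$ is an $H$-bicomodule BiHom-algebra'' in (ii) as exactly (3.3)--(3.4);
\item read ``$D$ is an $H$-bimodule BiHom-coalgebra'' in (iii) as exactly (3.2) and (3.6);
\item note that ``$H$-bimodule BiHom-algebra'' and ``$H$-bicomodule BiHom-coalgebra'' (in the twisted form of Section 2), together with the conditions on $\a_D,\b_D,\om_D,\psi_D$, are standing hypotheses of the admissible-pair definition.
\end{itemize}
With that reading both directions are a literal reshuffling of the same list, which is all the paper's one-line proof claims. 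You should drop the detour through morphisms of $\mathcal{LR}(H)\binom{m,n,p,q}{s,t,u,v}$ altogether.
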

\begin{proof}
A straightforward verification.
\end{proof}
\section{Categorical Equivalence}
\def\theequation{5.\arabic{equation}}
\setcounter{equation} {0}
In this section, let $H$ be a finite dimensional BiHom-bialgebra, we prove that $\mathcal{LR}(H)\binom{m,n,p,q}{s,t,u,v}$ is isomorphic to the category $\!^{H\o H^{*}}_{H\o H^{*}}\mathcal{YD}\binom{s,t}{p,q}$ of left left BiHom-$\binom{s,t}{p,q}$-Yetter-Drinfel'd modules over BiHom-bialgebra $H\o H^{*}=(H\o H^{*}, \mu\o \star, 1\o \varepsilon, \D\o \Delta_{H^*}, \v\o \varepsilon_{H^*}, \a\o \alpha_{H^*}, \b\o \beta_{H^*}, \om\o \om_{H^*}, \psi\o \psi_{H^*})$ as strict braided monoidal categories.
\begin{lemma}
Let $H$ be a finite dimensional BiHom-bialgebra. Then we have a functor $F:\mathcal{LR}(H)\binom{m,n,p,q}{s,t,u,v}\ra \!^{H\o H^{*}}_{H\o H^{*}}\mathcal{YD}\binom{s,t}{p,q}$ given for any object $M\in \mathcal{LR}(H)\binom{m,n,p,q}{s,t,u,v}$ and any morphism $\vartheta$ by
$$F(M)=M,\quad  F(\vartheta)=\vartheta,$$
where $H\o H^{*}$ is a BiHom-bialgebra with tensor product and tensor coproduct.
\end{lemma}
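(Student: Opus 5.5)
The plan is to equip each object $M\in\mathcal{LR}(H)\binom{m,n,p,q}{s,t,u,v}$ with an explicit left $H\o H^{*}$-module and left $H\o H^{*}$-comodule structure. We then check the $\binom{s,t}{p,q}$-Yetter-Drinfel'd compatibility, and finally check that $H$-bilinear, $H$-bicolinear maps are morphisms on the other side. Since $H$ is finite dimensional, fix a basis $\{e_i\}$ of $H$ with dual basis $\{e^i\}$ of $H^{*}$. The right $H$-comodule structure becomes a left $H^{*}$-action, and the right $H$-action becomes a left $H^{*}$-coaction. Concretely, we define
\begin{align*}
(h\o f)\cdot m&=h\tr\bigl(f(\theta(m_{(1)}))\,\theta_M(m_{(0)})\bigr),\\
\rho(m)&=m_{[-1]}\o e^{i}\o \theta'_M(m_{[0]}\tl \theta''(e_i)).
\end{align*}
Here $\theta,\theta',\theta''$ and $\theta_M$ are suitable monomials in $\a,\b,\om,\psi$ (respectively $\a_M,\dots,\psi_M$). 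We choose them so that the BiHom twists $\a\o\alpha_{H^*}$, $\b\o\beta_{H^*}$, etc. act correctly, with $\alpha_{H^*}(f)=f\circ\a^{-1}$ and so on. The exponents are fixed by the requirement that the unit axioms hold. For example, $(1\o\varepsilon_{H^*})\cdot m=\b_M(m)$ forces $\theta_M$ in terms of the counit identity $m_{(0)}\v(m_{(1)})=\om_M(m)$. The counit axiom for $\rho$ forces the twist in $\theta'_M$ in the same way.

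The key steps, in order, are as follows.
\begin{enumerate}
\item[(a)] Show that $H^{*}$ acts as a left BiHom-module. This reduces, via the definition of $\star$ (with its $\a^{-1}\om^{-1}$ and $\b^{-1}\psi^{-1}$ twists), to the BiHom-coassociativity of the right coaction.
\item[(b)] Show that the $H$- and $H^{*}$-actions combine into an $H\o H^{*}$-module. This requires the tensor-product relation $(h\o f)(h'\o f')=hh'\o f\star f'$ and hence that the two actions commute up to twists. That is exactly the left-right Long condition (\ref{e4.2}).
\item[(c)] Dually, show that the left $H$-coaction and the $H^{*}$-coaction assemble into an $H\o H^{*}$-comodule. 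This uses the right $H$-module axiom for $\tl$, and the commutation of the two coactions is the right-left Long condition (\ref{e4.4}).
\item[(d)] Verify the compatibility (\ref{e4.1})-type condition for $H\o H^{*}$. Since $\D_{H\o H^*}=\D\o\D_{H^*}$, the condition splits into four mixed pieces:
\begin{enumerate}
\item[(i)] the $H$-$H$ part, which is exactly (\ref{e4.1});
\item[(ii)] the $H^{*}$-$H^{*}$ part, which after evaluating on basis elements becomes (\ref{e4.3});
\item[(iii)] the two cross terms, which follow again from (\ref{e4.2}) and (\ref{e4.4}).
\end{enumerate}
\item[(e)] Check morphisms. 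An $H$-bilinear, $H$-bicolinear map commutes with all four structures and with the twist maps, so it is $H\o H^{*}$-linear and colinear. Functoriality is then trivial because $F$ is the identity on objects and morphisms.
\end{enumerate}

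The main obstacle is step (d)(ii), together with pinning down the exponents. The parameters $m,n,u,v$ appear in (\ref{e4.3}) but not in the target category ${}^{H\o H^*}_{H\o H^*}\mathcal{YD}\binom{s,t}{p,q}$. They must therefore be absorbed into the choice of $\theta''$ in the $H^{*}$-coaction: we take $\theta''=\a^{m+1}\b^{u}\om^{v+1}\psi^{n}$ up to a shift, matching the right-action twist appearing in $c_{M,N}$. We then transfer (\ref{e4.3}) through the dual-basis identity $\sum_i f(e_i)e^i=f$. We expect this to make the $\binom{s,t}{p,q}$ compatibility for the $H^{*}$ component hold.

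I would first carry out the computation with all twists trivial, which recovers the Lu-Wang argument \cite{LW17}. I would then reinsert the BiHom maps, adjusting the exponents until both the unit and counit normalizations and (\ref{e4.3}) match.
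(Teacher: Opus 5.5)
Your architecture is the paper's. $H$ acts by $\tr$ and $H^{*}$ acts through the right coaction; $H$ coacts by the left coaction and $H^{*}$ coacts by dualising $\tl$ over $\{e_i\},\{e^i\}$. Then (\ref{e4.2}) gives the module axiom, (\ref{e4.4}) together with the right module axiom gives coassociativity, and (\ref{e4.1}), (\ref{e4.3}) drive the compatibility. The problem is that the proposal stops exactly where the BiHom content begins, and your stated rule for fixing the exponents does not work. The unit and counit axioms only force the twists on $M$: $\om_M^{-1}$ in the action and $\a_M^{-1}$ in the coaction. The (co)module and (co)unit axioms hold for \emph{arbitrary} commuting monomials in $\a,\b,\om,\psi$ placed on the $H$-arguments, so they cannot fix those twists. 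Only the compatibility condition does, in two ways:
\begin{enumerate}
\item[(1)] The $H$-leg only needs the twists on $h$ and on $m_{[-1]}$ to cancel. The paper uses $E^{-1}$ and $E$, so your untwisted choice is fine.
\item[(2)] The $H^{*}$-leg fixes only the \emph{composite} of the twist $\theta$ on the argument of $f$ and the twist $\Theta$ in $\a_M^{-1}(m_{[0]})\tl\Theta(e_i)$. Converting the $\a^{m+1}\b^{u}\om^{v+1}\psi^{n}$-twisted identity (\ref{e4.3}) into the $\a^{s}\b^{p+1}\om^{q}\psi^{t+1}$-twisted target forces $\Theta\theta=E\a^{-1}\om^{-1}=\a^{m+s+1}\b^{p+u+2}\om^{q+v+1}\psi^{n+t+2}$, where $E=\a^{m+s+2}\b^{p+u+2}\om^{q+v+2}\psi^{n+t+2}$.
\end{enumerate}
The paper takes $\theta=\mathrm{id}$ and $\Theta=E\a^{-1}\om^{-1}$. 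Your heuristic of reading $\theta''$ off the right-action twist of $c_{M,N}$ can be made to work, but only if, symmetrically, $\theta=\a^{s}\b^{p+1}\om^{q}\psi^{t+1}$ (the left-action twist of $c_{M,N}$). Nothing in your normalisation procedure produces that choice, so (d)(ii) is still an expectation rather than a verification.

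Second, step (d)(iii) cites the wrong axioms. Conditions (\ref{e4.2}) and (\ref{e4.4}) relate the $H$-action to the $H^{*}$-action, and the $H$-coaction to the $H^{*}$-coaction. They are used up in steps (b) and (c) and play no role in the compatibility. The cross terms pair the $H$-action with the $H^{*}$-coaction, and the $H^{*}$-action with the $H$-coaction. They therefore need the bimodule axiom $\a(a)\tr(m\tl b)=(a\tr m)\tl\b(b)$ and the bicomodule axiom $\om(m_{[-1]})\o m_{[0](0)}\o m_{[0](1)}=m_{(0)[-1]}\o m_{(0)[0]}\o\psi(m_{(1)})$; this is exactly how the paper closes its computation. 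Moreover, $\D_{H\o H^{*}}=\D\o\D_{H^{*}}$ does not by itself split the compatibility into independent pieces. To split it, you would have to show that the twisted condition is stable under products, using $h\o f=(\a^{-1}(h)\o\v)(1\o\b_{H^{*}}^{-1}(f))$. The paper avoids this by verifying the condition directly for a general $h\o f$, after evaluating the $H^{*}$-leg at an arbitrary $g\in H$.
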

\begin{proof}
For all $M\in \mathcal{LR}(H)\binom{m,n,p,q}{s,t,u,v}$, let $A=\a^{m}\psi^{n},B=\b^{p}\om^{q},C=\a^{s}\psi^{t},D=\b^{u}\om^{v},E=\a^{m+s+2}\b^{p+u+2}\om^{q+v+2}\psi^{n+t+2}$, first of all, define the left action of $H\o H^{*}$ on $M$ by
\begin{align*}
(h\o f)\c m=f(m_{(1)})E^{-1}(h)\tr \om_{M} ^{-1}(m_{(0)}),
\end{align*}
for all $h\in H, f\in H^{*}$ and $m\in M$. Then $M$ is a left $H\o H^{*}$-module. Indeed,
\begin{align*}
&(h\o f)(h'\o f')\c \b_{M}(m)
\\&=f f'(\b (m_{(1)}))E^{-1}(hh')\tr \om_{M} ^{-1}(\b_{M} (m_{(0)}))
\\&=f \alpha^{-1}\b\om^{-1}(m_{(1)1})f'\psi^{-1}(m_{(1)2})E^{-1}\a(h)\tr [ E^{-1}(h')\tr\om_{M} ^{-1}(m_{(0)})]
\\&=f'(m_{(1)})f\a^{-1}\b\om ^{-1}(m_{(0)(1)})E^{-1}\a(h)\tr \om_{M} ^{-1}[E^{-1}\om (h')\tr \om_{M} ^{-1}(m_{(0)(0)})]
\\&=f'(m_{(1)})f\a^{-1}([E^{-1}(h')\tr \om_{M} ^{-1}(m_{(0)})]_{(1)})E^{-1}\a(h)\tr \om_{M} ^{-1}([E^{-1}(h')\tr \om_{M} ^{-1}(m_{(0)})]_{(0)})
\\&=f'(m_{(1)})(\a (h)\o \a_{H^{*}}(f))\c [E^{-1}(h')\tr \om_{M} ^{-1}(m_{(0)})]
\\&=(\a (h)\o \a_{H^{*}}(f))\c [(h'\o f')\c m]
\end{align*} 
And $(1\o \v)\c m=\<\v, m_{(1)}\>1\tr \om_{M} ^{-1}(m_{(0)})=\b_{M} (m)$. 

Then, $(\a\o \a_{H^*})(h\o f)\c \a_M(m)=\a_M((h\o f)\c m),(\b\o \b_{H^*})(h\o f)\c \b_M(m)=\b_M((h\o f)\c m),(\om\o \om_{H^*})(h\o f)\c \om_M(m)=\om_M((h\o f)\c m),(\psi\o \psi_{H^*})(h\o f)\c \psi_M(m)=\psi_M((h\o f)\c m),$
$M$ is a left $H\o H^{*}$-module. Next, define the left coaction of $H\o H^{*}$ on $M$ by
\begin{align*}
\r(m)=m_{<-1>}\o m_{<0>}=E&(m_{[-1]})\o e^{i}\o \a_{M}^{-1}(m_{[0]})\tl E\a^{-1}\om^{-1}(e_{i}),
\end{align*}
where ${e_{i}}$ and ${e^{i}}$ are dual bases of $H$. We then prove that $M$ is a left $H\o H^{*}$-comodule. On one hand,
\begin{align*}
&(\om \o \om_{H^*}\o \r)\circ \r(m)
\\&=E\om (m_{[-1]})\o \om_{H^*}(e^{i})\o E([\a_{M}^{-1}(m_{[0]})\tl E\a^{-1}\om^{-1}(e_{i})]_{[-1]})\o e^{j}
\\&\quad\o \a_{M}^{-1}([\a_{M}^{-1}(m_{[0]})\tl E\a^{-1}\om^{-1}(e_{i})]_{[0]})\tl E\a^{-1}\om^{-1}(e_{j})
\\&=E\om (m_{[-1]})\o e^{i}
\o E(m_{[0][-1]})\o e^{j}\o \a_{M}^{-1}[\a_{M}^{-1}(m_{[0][0]})\tl E\a^{-1}\om^{-2}\psi(e_{i})]\tl E\a^{-1}\om^{-1}(e_{j})
\\&=E\om (m_{[-1]})\o e^{i}
\o E(m_{[0][-1]})\o e^{j}\o \a_{M}^{-1}(m_{[0][0]})\tl E\a^{-2}\om^{-2}\psi(e_{i})E\a^{-1}\b^{-1}\om^{-1}(e_{j})
\\&=E(m_{[-1]1})\o e^{i}
\o E(m_{[-1]2})\o e^{j}\o \a_{M}^{-1}\psi_{M}(m_{[0]})\tl E\a^{-2}\om^{-2}\psi(e_{i}) E\a^{-1}\b^{-1}\om^{-1}(e_{j}).
\end{align*} 
Evaluating the right side of equation on $id\o h\o id \o g\o id$, we obtain
\begin{align*}
E(m_{[-1]1})\o E(m_{[-1]2})\o  \a_{M}^{-1}\psi_{M}(m_{[0]})\tl E\a^{-2}\om^{-2}\psi(h) E\a^{-1}\b^{-1}\om^{-1}(g).
\end{align*}
On the other hand,
\begin{align*}
&(\Delta_{H\otimes H^*} \otimes \psi_{M})\rho(m)
\\&=E(m_{[-1]1})\o e^{i}_{1}\o E(m_{[-1]2})\o e^{i}_{2}\o \a_{M}^{-1}\psi_{M}(m_{[0]})\tl E\a^{-1}\om^{-1}\psi(e_{i} ).
\end{align*}
Evaluating the right side of equation on $id\o h\o id \o g\o id$, we obtain
\begin{align*}
E(m_{[-1]1})\o E(m_{[-1]2})\o  \a_{M}^{-1}\psi_{M}(m_{[0]})\tl E\a^{-2}\om^{-2}\psi(h) E\a^{-1}\b^{-1}\om^{-1}(g).
\end{align*}
Since $h, g\in H$ are arbitrary, we have
\begin{align*}
(\om \o \om_{H^*}\o \r)\circ \r(m)=(\Delta_{H\otimes H^*} \otimes \psi)\rho(m).
\end{align*}
Since $(\v\o \v^{*}\o \mathrm{id}_{M})\r(m)=\v(m_{[-1]})\a_{M}^{-1}(m_{[0]})\tl 1_{H}=\psi_{M}(m)$, the other identity is straightforward and left to the reader.

Finally,
\begin{align*}
&([BC\b \psi (h_{1})\o B_{H^{*}}C_{H^{*}}\b_{H^*}\psi_{H^*}(f_{1})]\c m)_{<-1>}
\\&\quad[\a \b ^{-1}\psi ^{-1}\om (h_{2})\o \a_{H^*}\b_{H^*}^{-1}\psi_{H^*}^{-1}\om_{H^*}(f_{2})]
 \o ([BC\b \psi (h_{1})\o B_{H^{*}}C_{H^{*}}\b_{H^*}\psi_{H^*}(f_{1})]\c m)_{<0>}
\\&=f_{1}B^{-1}C^{-1}\b ^{-1}\psi ^{-1}(m_{(1)})(E^{-1}BC\b \psi (h_{1})\tr \om_{M} ^{-1}(m_{(0)}))_{<-1>}
\\&\quad[\a \b ^{-1}\psi ^{-1}\om (h_{2})\o \a_{H^*}\b_{H^*}^{-1}\psi_{H^*}^{-1}\om_{H^*}(f_{2})]
 \o (E^{-1}BC\b \psi (h_{1})\tr \om_{M} ^{-1}(m_{(0)}))_{<0>}
\\&=f_{1}B^{-1}C^{-1}\b ^{-1}\psi ^{-1}(m_{(1)})
E([E^{-1}BC\b \psi (h_{1})\tr \om_{M} ^{-1}(m_{(0)})]_{[-1]}E^{-1}\a \b ^{-1}\psi ^{-1}\om (h_{2}))
\\&\quad\o e^{i}\star\a_{H^*}\b_{H^*}^{-1}\psi_{H^*}^{-1}\om_{H^*}(f_{2})
 \o \a_{M}^{-1}([E^{-1}BC\b \psi (h_{1})\tr \om_{M} ^{-1}(m_{(0)})]_{[0]})\tl E\a^{-1}\om^{-1}(e_{i})
\\&=f_{1}B^{-1}C^{-1}\b ^{-1}\psi ^{-1}(m_{(1)})
h_{1}E\a \om ^{-1}(m_{(0)[-1]})
\\&\quad\o e^{i}\star\a_{H^*}\b_{H^*}^{-1}\psi_{H^*}^{-1}\om_{H^*}(f_{2})
 \o \a_{M}^{-1}(E^{-1}BC\b \psi \om  ( h_{2}) \tr \om_{M} ^{-1}(m_{(0)[0]})\tl E\a^{-1}\om^{-1}(e_{i}).
\end{align*}
Evaluating the right side of the equation on $id\o g\o id$, we obtain
\begin{align*}
&f[B^{-1}C^{-1}\alpha ^{-1}\b ^{-1}\psi ^{-1}\om ^{-1}(m_{(1)})\a ^{-1}\beta ^{-1}\psi ^{-1}\om ^{-1}(g_{2})]h_{1}E\a \om^{-1}(m_{(0)[-1]})
\\&\quad \o \a_{M}^{-1}[E^{-1}BC\b \psi \om  ( h_{2}) \tr \om_{M} ^{-1}(m_{(0)[0]})]\tl E\a^{-2}\om^{-2}(g_{1})
\end{align*}
And
\begin{align*}
&(h_{1}\o f_{1})(\a \o \a_{H^*})(m_{<-1>})\o [BC\b \psi \om  (h_{2})\o B_{H^{*}}C_{H^{*}}\b_{H^*}\psi_{H^*}\om_{H^*} (f_{2})] \c m_{<0>}
\\&=h_{1}E\a (m_{[-1]})\o f_{1}\star \a_{H^*}(e^{i})
\\&\quad\o [BC\b \psi \om  (h_{2})\o B_{H^{*}}C_{H^{*}}\b_{H^*}\psi_{H^*}\om_{H^*} (f_{2})] \c [\a_{M}^{-1}(m_{[0]})\tl E\a^{-1}\om^{-1}(e_{i})]
\\&=f_{2}B^{-1}C^{-1}\b ^{-1}\psi ^{-1}\om ^{-1}([\a_{M}^{-1}(m_{[0]})\tl E\a^{-2}\om^{-1}(e_{i})]_{(1)})
\\&\quad h_{1}E\a (m_{[-1]})\o f_{1}\star e^{i}\o E^{-1}BC\b \psi \om  (h_{2})\tr \om_{M} ^{-1}([\a_{M}^{-1}(m_{[0]})\tl E\a^{-2}\om^{-1}(e_{i})]_{(0)}).
\end{align*}
Evaluating the right side of the equation on $id\o g\o id$, we obtain
\begin{align*}
&f[\alpha ^{-2}\om ^{-2}(g_{1})B^{-1}C^{-1}\b ^{-2}\psi ^{-2}\om ^{-1}([\a_{M}^{-1}(m_{[0]})\tl 
E\a^{-2}\beta ^{-1}\om^{-1}\psi ^{-1}(g_{2})]_{(1)})]
\\&\quad h_{1}E\a (m_{[-1]})\o E^{-1}BC\b \psi \om  (h_{2})\tr \om_{M} ^{-1}([\a_{M}^{-1}(m_{[0]})\tl E\a^{-2}\beta ^{-1}\om^{-1}\psi ^{-1}(g_{2})]_{(0)})
\\&=fB^{-1}C^{-1}\b ^{-2}\psi ^{-2}\om ^{-1}[BC\alpha ^{-2}\b ^{2}\psi ^{2}\om ^{-1}(g_{1})[\a_{M}^{-1}(m_{[0]})\tl
 E\a^{-2}\beta ^{-1}\om^{-1}\psi ^{-1}(g_{2})]_{(1)}]
\\&\quad h_{1}E\a (m_{[-1]})\o E^{-1}BC\b \psi \om  (h_{2})\tr \om_{M} ^{-1}([\a_{M}^{-1}(m_{[0]})\tl E\a^{-2}\beta ^{-1}\om^{-1}\psi ^{-1}(g_{2})]_{(0)})
\\&=fB^{-1}C^{-1}\b ^{-2}\psi ^{-2}\om ^{-1}[\a^{-1}\b (m_{[0](1)})BC\alpha ^{-1}\b \psi (g_{2})]
\\&\quad h_{1}E\a (m_{[-1]})\o E^{-1}BC\b \psi \om  (h_{2})\tr \om_{M} ^{-1}(\a_{M}^{-1}(m_{[0](0)})\tl ABCD\b \psi ^{2}\om (g_{1}))
\\&=f[B^{-1}C^{-1}\alpha ^{-1}\b ^{-1}\psi ^{-2}\om ^{-1}(m_{[0](1)})
\a ^{-1}\beta ^{-1}\psi ^{-1}\om ^{-1}(g_{2})]
\\&\quad h_{1}E\a (m_{[-1]})\o E^{-1}BC \b \psi \om  ( h_{2}) \tr [\a_{M}^{-1}\om_{M} ^{-1}(m_{[0](0)})\tl E\alpha ^{-2}\b ^{-1}\om ^{-2}(g_{1})]
\\&=f[B^{-1}C^{-1}\alpha ^{-1}\b ^{-1}\psi ^{-1}\om ^{-1}(m_{(1)})
\a ^{-1}\beta ^{-1}\psi ^{-1}\om ^{-1}(g_{2})]
\\&\quad h_{1}E\a\om^{-1} (m_{(0)[-1]})\o [E^{-1}BC \a^{-1}\b \psi \om  ( h_{2}) \tr \a_{M}^{-1}\om_{M} ^{-1}(m_{(0)[0]})]\tl E\alpha ^{-2}\om ^{-2}(g_{1}).
\end{align*}
Therefore $M$ is a left left BiHom-$\binom{s,t}{p,q}$-Yetter-Drinfel'd modules over $H\o H^{*}$. It is straightforward to verify that any morphism in $\mathcal{LR}(m,n,p,q)$ is also a morphism in $\!^{H\o H^{*}}_{H\o H^{*}}\mathcal{YD}\binom{s,t}{p,q}$.

The proof is completed.
\end{proof}
\begin{lemma}
Let $H$ be a BiHom-bialgebra. Then we have a functor $G:\!^{H\o H^{*}}_{H\o H^{*}}\mathcal{YD}\binom{s,t}{p,q}\ra \mathcal{LR}(H)\binom{m,n,p,q}{s,t,u,v}$ given for any object $M\in \!^{H\o H^{*}}_{H\o H^{*}}\mathcal{YD}\binom{s,t}{p,q}$ and any morphism $\theta$ by
$$G(M)=M,\quad G(\theta)=\theta.$$
\end{lemma}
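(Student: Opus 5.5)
The functor $G$ inverts the construction in the previous lemma: we recover the four $H$-structures on $M$ from its $H\o H^{*}$-module and comodule structures. Keep $A,B,C,D,E$ as in the previous lemma, and let $\{e_i\},\{e^i\}$ be dual bases ($H$ is again tacitly finite dimensional). For $M\in {}^{H\o H^{*}}_{H\o H^{*}}\mathcal{YD}\binom{s,t}{p,q}$ we set
\begin{align*}
&h\tr m=(E(h)\o \v)\c \om_M(\b_M^{-1}(m))\ \ (\text{up to a correcting power of }\b_M),\qquad m_{(0)}\o m_{(1)}=(1\o e^{i})\c \om_M(m)\o e_{i},\\
&m_{[-1]}\o m_{[0]}=E^{-1}((\mathrm{id}\o\v_{H^*})(m_{<-1>}))\o \psi_M^{-1}(m_{<0>})\ \text{(suitably twisted)},\qquad m\tl h=\langle m_{<-1>},\text{evaluation of the }H^{*}\text{-leg at }E^{-1}\a\om(h)\rangle\,\a_M(m_{<0>}).
\end{align*}
The twisting maps are fixed so that applying $F$ gives back the original structures. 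Concretely, $(h\o f)\c m=f(m_{(1)})E^{-1}(h)\tr\om_M^{-1}(m_{(0)})$ is solved separately for $f=\v$ and for $h=1$, and the formula for $\r$ is solved by applying $\v$ to the $H$-leg and evaluating the $H^{*}$-leg.

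First, the one-sided structures. The left $H$-module and right $H$-comodule axioms follow from the $H\o H^{*}$-module axiom restricted to the subalgebras $H\o\v$ and $1\o H^{*}$. For the comodule we use the duality between right $H$-comodules and left $H^{*}$-modules, which is compatible with the product $\star$ of Example 2.1 and with the $\a^{-1}\om^{-1},\b^{-1}\psi^{-1}$ twists built into it. Symmetrically, the left $H$-comodule and right $H$-module axioms come from the $H\o H^{*}$-comodule axiom: project along $\mathrm{id}\o\v_{H^*}$ and $\v\o\mathrm{id}$, and use the dual-basis identity $e^{i}_1\o e^{i}_2\o e_i$ against $\Delta_{H^*}$. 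The structure-map compatibilities ($\a_M$ versus $\tr$, etc.) are inherited directly.

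Second, the bimodule and bicomodule compatibilities, and the Long conditions (\ref{e4.2}) and (\ref{e4.4}). These come from the fact that $H\o\v$ and $1\o H^{*}$ commute up to the twisting maps inside $H\o H^{*}$, since $(h\o\v)(1\o f)=\a(h)\o\b_{H^*}(f)$ and similarly on the other side. The dual statement holds for the coaction, because $\Delta_{H\o H^*}$ is the tensor coproduct.

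Third, the Yetter--Drinfel'd conditions (\ref{e4.1}) and (\ref{e4.3}). These are the main obstacle. We specialize the $\binom{s,t}{p,q}$-YD compatibility for $H\o H^{*}$ to elements $h\o\v$, which yields (\ref{e4.1}) after applying $\v_{H^*}$ to the $H^{*}$-leg of the coaction. We then specialize it to $1\o f$ and evaluate the $H^{*}$-legs at an arbitrary $g\in H$, running the computation in the proof of the previous lemma backwards; since $f$ and $g$ are arbitrary, (\ref{e4.3}) follows. The delicate point is bookkeeping the exponents: we must check that the power $E=\a^{m+s+2}\b^{p+u+2}\om^{q+v+2}\psi^{n+t+2}$ makes the $H^{*}$-side YD condition twisted by $B_{H^*}C_{H^*}$ become exactly the right-right $\binom{m,n}{u,v}$ condition with $\a^{m+1}\b^{u}\om^{v+1}\psi^{n}$. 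We first verify this for $H$ an ordinary bialgebra as a sanity check, then track each of $\a,\b,\om,\psi$ separately.

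Finally, $H\o H^{*}$-linear and colinear maps are $H$-bilinear and $H$-bicolinear, because they commute with the restricted actions and the projected coactions. Hence $G(\theta)=\theta$ is a morphism and $G$ is a functor.
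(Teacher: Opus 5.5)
Your overall architecture matches the paper's. You restrict the $H\o H^{*}$-action to $H\o\v$ and $1\o H^{*}$, project the coaction along $\mathrm{id}\o\v_{H^*}$ and $\v\o\mathrm{id}$, and then check the one-sided axioms, the compatibilities and (\ref{e4.1})--(\ref{e4.4}). As in the paper, (\ref{e4.3}) comes from the Yetter--Drinfel'd condition at $1\o f$, evaluated against an arbitrary $f\in H^{*}$.

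\textbf{The gap in your second step.} The bimodule condition $\a(h)\tr(m\tl g)=(h\tr m)\tl\b(g)$ couples the left action, which comes from the \emph{action} of $H\o\v$, with the right action, which comes from the $H^{*}$-leg of the \emph{coaction}. Symmetrically, the bicomodule condition couples the left coaction (the $H$-leg of $m_{<-1>}$) with the right coaction (the action of $1\o H^{*}$).

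The relation $(h\o\v)(1\o f)=\a(h)\o\b_{H^*}(f)$ is a statement about the algebra $H\o H^{*}$. The tensor-coproduct property is a statement about its coalgebra. Neither ever relates the action to the coaction, so neither can give these two conditions. For an $H\o H^{*}$-module that is merely also an $H\o H^{*}$-comodule, $\tr$ and $\tl$ are unrelated.

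What you need is the $\binom{s,t}{p,q}$-Yetter--Drinfel'd condition of $M$ itself, which is how the paper argues:
\begin{itemize}
\item specialize it at $x=E(h)\o\v$ and apply $\v$ to the $H$-leg of the coaction to get the bimodule condition;
\item specialize it at $x=1\o e^{i}$ and apply $\v_{H^*}$ to the $H^{*}$-leg to get the bicomodule condition.
\end{itemize}
Your step-two reasoning is valid only for (\ref{e4.2}) and (\ref{e4.4}). The first follows from the module axiom plus the commutation in $H\o H^{*}$; the second follows from coassociativity plus the tensor coproduct. The fix is to move the bimodule and bicomodule conditions into your third step.

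\textbf{The displayed structure maps are wrong as written.} Each one violates a unit or counit axiom:
\begin{itemize}
\item your $\r_R$ gives $m_{(0)}\v(m_{(1)})=(1\o\v)\c\om_M(m)=\b_M\om_M(m)$ instead of $\om_M(m)$;
\item your $\tr$ gives $1\tr m=\om_M(m)$, and no power of $\b_M$ removes the $\om_M$;
\item the extra $\psi_M^{-1}$ in $\r_L$ gives $\v(m_{[-1]})m_{[0]}=m$ instead of $\psi_M(m)$;
\item your $\tl$ gives $m\tl 1=\a_M\psi_M(m)$ instead of $\a_M(m)$.
\end{itemize}
Carrying out your own recipe of inverting $F$ fixes this. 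In the action formula set $f=\v$, respectively $h=1$. To the coaction formula apply $\mathrm{id}\o\v_{H^*}$, respectively $\v\o\mathrm{id}$, and use $\v(m_{[-1]})m_{[0]}=\psi_M(m)$ and $x\tl 1_H=\a_M(x)$. This yields exactly the paper's definitions:
\begin{itemize}
\item $h\tr m=(E(h)\o\v)\c m$;
\item $m_{[-1]}\o m_{[0]}=(E^{-1}\o\v_{H^*})(m_{<-1>})\o m_{<0>}$;
\item $m\tl h=\langle(\v\o\mathrm{id})m_{<-1>},E^{-1}\om\psi(h)\rangle\,\a_M\psi_M^{-1}(m_{<0>})$;
\item $m_{(0)}\o m_{(1)}=(1\o e^{i})\c\b_M^{-1}\om_M(m)\o\b\om^{-1}(e_{i})$.
\end{itemize}
Every later verification, in particular the exponent bookkeeping you flag in step three, depends on these exact twists. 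You should state them explicitly rather than leave ``suitably twisted'' placeholders.
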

\begin{proof}
Let $A=\a^{m}\psi^{n},B=\b^{p}\om^{q},C=\a^{s}\psi^{t},D=\b^{u}\om^{v},E=\a^{m+s+2}\b^{p+u+2}\om^{q+v+2}\psi^{n+t+2}$, for any $M\in \!^{H\o H^{*}}_{H\o H^{*}}\mathcal{YD}\binom{s,t}{p,q}$, denote the left $H\o H^{*}$-coaction on $M$ by
\begin{align*}
m\m m_{<-1>}\o m_{<0>},
\end{align*}
for all $m\in M$. Define the $H$-bimodule and $H$-bicomodule structures as follows:
\begin{align*}
&h \tr m = (E(h) \otimes \v) \c m,
\\&\r_L(m) = m_{[-1]} \otimes m_{[0]} =(E^{-1}\otimes \v^*)(m_{<-1>}) \otimes m_{<0>},
\\&m \tl h =\big<(\v\otimes \mathrm{id})m_{<-1>},\, E^{-1}\om\psi(h) \big> \a_M\psi_M^{-1}(m_{<0>}),
\\&\r_R(m) = m_{(0)} \otimes m_{(1)} =(1 \otimes e^i) \c \beta_{M}^{-1}\om_{M}(m) \otimes \b\om^{-1}(e_i),
\end{align*}
for all $m\in M,h\in H$.

Obviously, $M$ is a left $H$-module. And
\begin{align*}
&(\D\o \psi_{M})\circ \r_{L}(m)
\\&=(E^{-1} \otimes \v^*)(m_{<-1>1}) \o (E^{-1}\otimes \v^*)(m_{<-1>2}) \o \psi_{M}(m_{<0>})
\\&=(E^{-1}\om  \otimes \v^*)(m_{<-1>}) \otimes (E^{-1}\otimes \v^*)(m_{<0><-1>}) \otimes m_{<0><0>}
\\&=(\om \o\om_{H^{*}}\o \r_{L})\circ \r_{L}(m).
\end{align*} 
The other identities is straightforward. Thus, $M$ is a left $H$-comodule. For all $h, g\in H, m\in M$,
\begin{align*}
&[m \tl h]\tl \b (g)
\\&=[\big<(\v\otimes \mathrm{id})m_{<-1>},\, E^{-1}(h) \big> \a_{M}\psi_{M}^{-1}(m_{<0>})]\tl \b(g)
\\&=\big<(\v\otimes \mathrm{id})m_{<-1>},\, E^{-1}\om\psi(h) \big> \big<(\v\otimes \mathrm{id})m_{<0><-1>},\, E^{-1}\a^{-1}\b\om\psi^{2} (g) \big> \a_{M}^{2}\psi_{M}^{-2}(m_{<0><0>})
\\&=\big<(\v\otimes \mathrm{id})m_{<-1>1},\, E^{-1}\om^{2}\psi (h) \big> \big<(\v\otimes \mathrm{id})m_{<-1>2},\, E^{-1}\a^{-1}\b\om\psi^{2} (g) \big> \a_{M}^{2}\psi_{M}^{-1}(m_{<0>})
\\&=\big<(\v\otimes \mathrm{id})m_{<-1>},\, E^{-1}\a ^{-1}\om\psi(hg) \big> \a_{M}^{2}\psi_{M}^{-1}(m_{<0>})
\\&=\a_{M}(m) \tl hg.
\end{align*} 
The other identities is obvious. Thus $M$ is a right $H$-module. Since
\begin{align*}
&(\om_{M}\o \D)\circ \r_{R}(m)
\\&=(1 \otimes e^i) \c \b_{M}^{-1}\om_{M}^{2}(m) \otimes \b\om^{-2}(e_{i1})\otimes \b\om^{-2}(e_{i2})
\\&=(1 \otimes e^i\star e^j) \c \b_{M}^{-1}\om_{M}^{2}(m) \otimes \a\b\om^{-1} (e_{i})\otimes \b^{2}\om^{-2}\psi (e_{j})
\\&=(1 \otimes \a_{H^{*}}^{-1}(e^i))((1 \otimes \b_{H^{*}}^{-1}\om_{H^{*}}(e^j)) \c  \b_{M}^{-1}\om_{M}^{2}(m)) \otimes \b\om^{-1}(e_i) \otimes \b\om^{-1}\psi (e_j)
\\&=(1 \otimes e^i) \c \b_{M}^{-1}\om_{M}((1 \otimes e^j) \c \b_{M}^{-1}\om_{M}(m)) \otimes \b\om^{-1}(e_i) \otimes \b\om^{-1}\psi (e_j)
\\&=(\r_{R}\o \psi )\circ \r_{R}(m).
\end{align*} 
Thus $M$ is a right $H$-comodule. Moreover
\begin{align*}
&\a (h) \tr (m\tl g)
\\&=\big<(\v\otimes \mathrm{id})m_{<-1>},\, E^{-1}\om\psi(g) \big>(E\a (h) \otimes \v) \c  \a_{M}\psi_{M}^{-1}(m_{<0>})
\\&=\big<(\v\otimes \mathrm{id})(\b \o \b_{H^{*}})m_{<-1>}
,\,  E^{-1}\b\om\psi (g) \big> \a_{M}\psi_{M}^{-1}((E\psi ( h) \otimes \v) \c m_{<0>})
\\&=\big<(\v\otimes \mathrm{id})(\alpha ^{-1}\o \alpha_{H^{*}}^{-1})
[(B^{-1}C^{-1}E\b ^{-1}\psi ^{-1}\om ^{-1}(h_{1})\o \v)(\alpha \o \alpha_{H^{*}})(m_{<-1>})]
,\, \\&\quad E^{-1}\b\om\psi (g) \big> \a_{M}\psi_{M}^{-1}((E( h_{2}) \otimes \v) \c m_{<0>})
\\&=\big<(\v\otimes \mathrm{id})(\alpha ^{-1}\o \alpha_{H^{*}}^{-1})[((E\om ^{-1}(h_{1}) \otimes \v) \c m)_{<-1>}
(B^{-1}C^{-1}E\a\b ^{-2}\psi ^{-2}(h_{2}) \otimes \v)]
,\, \\&\quad E^{-1}\b\om\psi (g) \big> \a_{M}\psi_{M}^{-1}(((E\om ^{-1}(h_{1}) \otimes \v) \c m)_{<0>})
\\&=\big<(\v\otimes \mathrm{id})((E(h) \otimes \v) \c m)_{<-1>},\, E^{-1}\b\om\psi (g) \big> \a_{M}\psi_{M}^{-1}(((E(h) \otimes \v) \c m)_{<0>})
\\&=(h \tr m)\tl \b (g).
\end{align*}
Thus $M$ is an BiHom-$H$-bimodule. And
\begin{align*}
&(\om \o \r_{R})\circ \r_{L}(m)
\\&=(E^{-1}\om  \otimes \v^*)(m_{<-1>}) \otimes (1 \otimes e^i) \c \b_{M}^{-1}\om_{M}(m_{<0>}) \otimes \b\om^{-1}(e_i)
\\&=(E^{-1} \otimes \v^*)(\alpha ^{-1}\o \alpha_{H^{*}}^{-1})[(1\o B^{-1}C^{-1}\b ^{-1}\psi ^{-1}\om ^{-1}(e^i_{1}))(\a \b_{M}^{-1}\om_{M}\o \a_{H^{*}}\b_{H^{*}}^{-1}\om_{H^{*}})(m_{<-1>})]
\\&\quad\otimes (1\o e^i_{2}) \c\b_{M}^{-1}\om_{M}(m_{<0>}) \otimes \b\om^{-1}\psi (e_i)
\\&=(E^{-1} \otimes \v^*)(\alpha ^{-1}\o \alpha_{H^{*}}^{-1})[((1 \otimes \om_{H^{*}}^{-1}(e^i_{1})) \c \b_{M}^{-1}\om_{M}(m))_{<-1>}
(1 \otimes B^{-1}C^{-1}\a\b ^{-2}\psi ^{-2}(e^i_{2}))] 
\\&\quad\otimes ((1 \otimes \om_{H^{*}}^{-1}(e^i_{1})) \c \b_{M}^{-1}\om_{M}(m))_{<0>} \otimes \b\om^{-1}\psi (e_i)
\\&=(E^{-1} \otimes \v^*)(((1 \otimes e^i) \c \b_{M}^{-1}\om_{M}(m))_{<-1>}) \otimes ((1 \otimes e^i) \c \b_{M}^{-1}\om_{M}(m))_{<0>} \otimes \b\om^{-1}\psi (e_i)
\\&=(\r_{L}\o \psi )\circ \r_{R}(m).
\end{align*} 
Thus $M$ is an $H$-bicomodule.

For all $h\in H, m\in M$, we now prove $(\ref{e4.3})$: On one hand,
\begin{align*}
&[m\tl AD\a \om (h_2)]_{(0)}\o\alpha ^{-1}\b \psi \omega ^{-1}(h_{1})[m\tl AD\a \om (h_2)]_{(1)}
\\&=\big<(\v\otimes \mathrm{id})m_{<-1>},\, E^{-1} AD\a \om^{2}\psi(h_2) \big>\a_{M}\psi_{M}^{-1}(m_{<0>})_{(0)}\o\alpha ^{-1}\b \psi \omega ^{-1}(h_{1})\a_{M}\psi_{M}^{-1}(m_{<0>})_{(1)}
\\&=\big<(\v\otimes \mathrm{id})m_{<-1>},\, E^{-1}AD\a \om^{2}\psi (h_2) \big>(1 \otimes e^i) \c \a_{M}\b_{M}^{-1}\om_{M}\psi_{M}^{-1}(m_{<0>})\o\alpha ^{-1}\b \psi \omega ^{-1}(h_{1})\b\om^{-1}(e_i).
\end{align*} 
Evaluating the right side on $id\o f$, for all $f\in H^{*}$, we have
\begin{align*}
\big<(\v\otimes \mathrm{id})m_{<-1>},\, E^{-1}AD\a \om^{2}\psi (h_2) \big>f_{1}\b \psi (h_{1})(1 \otimes \b_{H^*}^{-2}\om_{H^*}\psi_{H^*}^{-1}(f_{2})) \c \a_{M}\b_{M}^{-1}\om_{M}\psi_{M}^{-1}(m_{<0>}).
\end{align*}
On the other hand,
\begin{align*}
&m_{(0)}\tl AD\a \om\psi  (h_{1})\o\b (m_{(1)})h_{2}
\\&=[(1 \otimes e^i) \c \b_{M}^{-1}\om_{M}(m)]\tl AD\a \om\psi (h_{1})\o \b^{2}\om^{-1}(e_i)h_{2}
\\&=\big<(\v\otimes \mathrm{id})[(1 \otimes e^i) \c \b_{M}^{-1}\om_{M}(m)]_{<-1>},\, E^{-1}AD\a\om^{2} \psi^{2}  (h_{1}) \big>
 \\&\quad\a_{M}\psi_{M}^{-1}([(1 \otimes e^i) \c \b_{M}^{-1}\om_{M}(m)]_{<0>})\o \b^{2}\om^{-1}(e_i)h_{2}.
\end{align*}
Evaluating the right side on $id\o f$, for all $f\in H^{*}$, we have
\begin{align*}
&\big<(\v\otimes \mathrm{id})[(1 \otimes \a_{H^{*}}^{-1}\b_{H^{*}}^{-2}(f_{1})) \c \b_{M}^{-1}\om_{M}(m)]_{<-1>},\, E^{-1}AD\a\om^{2} \psi^{2} (h_{1}) \big>f_{2}\b \psi (h_{2})
\\&\quad \a_{M}\psi_{M}^{-1}([(1 \otimes \a_{H^{*}}^{-1}\b_{H^{*}}^{-2}(f_{1})) \c \b_{M}^{-1}\om_{M}(m)]_{<0>})
\\&=\big<(\v\otimes \mathrm{id})[(1 \otimes \a_{H^{*}}^{-1}\b_{H^{*}}^{-2}(f_{1})) \c \b_{M}^{-1}\om_{M}(m)]_{<-1>}
\\&\quad(1 \otimes E_{H^{*}}^{-1}A_{H^{*}}D_{H^{*}}\a_{H^{*}}^{2}\b_{H^{*}}^{-2}\om_{H^{*}}^{3}(f_{2})) ,\, E^{-1}AD\a ^{2}\om ^{3}\psi^{2} (h) \big>
\\&\quad \a_{M}\psi_{M}^{-1}([(1 \otimes \a_{H^{*}}^{-1}\b_{H^{*}}^{-2}(f_{1})) \c \b_{M}^{-1}\om_{M}(m)]_{<0>})
\\&=\big<(\v\otimes \mathrm{id})[(1\o B_{H^{*}}^{-1}C_{H^{*}}^{-1}\a_{H^{*}}^{-1}\b_{H^{*}}^{-3}\psi_{H^{*}}^{-1}(f_{1}))
(\a \b^{-1}\om\o \a_{H^{*}}\b_{H^{*}}^{-1}\om_{H^{*}})(m_{<-1>})] ,\, 
\\&\quad E^{-1} AD\a ^{2}\om ^{3}\psi^{2} (h) \big>
\\&\quad \a_{M}\psi_{M}^{-1}((1\o \a_{H^{*}}^{-1}\b_{H^{*}}^{-2}\om_{H^{*}}( f_{2})) \c \b_{M}^{-1}\om_{M}(m_{<0>})).
\\&=\big<(\v\otimes \mathrm{id})m_{<-1>},\, E^{-1}AD\a \om^{2}\psi (h_2) \big>
\\&\quad f_{1}\b \psi (h_{1})(1 \otimes \b_{H^*}^{-2}\om_{H^*}\psi_{H^*}^{-1}(f_{2})) \c \a_{M}\b_{M}^{-1}\om_{M}\psi_{M}^{-1}(m_{<0>}).
\end{align*}
Since $f$ was arbitrary, we have
\begin{align*}
[m\tl AD\a \om (h_2)]_{(0)}\o\alpha ^{-1}\b \psi \omega ^{-1}(h_{1})[m\tl AD\a \om (h_2)]_{(1)}=m_{(0)}\tl AD\a \om\psi  (h_{1})\o\b (m_{(1)})h_{2}.
\end{align*}
The Eqs. $(\ref{e4.1})$, $(\ref{e4.2})$ and $(\ref{e4.4})$ left to the reader.

Therefore $M\in \mathcal{LR}(H)\binom{m,n,p,q}{s,t,u,v}$. It is straightforward to verify that any morphism in $\!^{H\o H^{*}}_{H\o H^{*}}\mathcal{YD}\binom{s,t}{p,q}$ is also a morphism in $\mathcal{LR}(H)\binom{m,n,p,q}{s,t,u,v}$. The proof is completed.
\end{proof}
To summarize, we have the main result of this section.
\begin{theorem}
Let $H$ be a BiHom-bialgebra. Then we have an isomorphism of prebraided monoidal category
\begin{align*}
\mathcal{LR}(H)\binom{m,n,p,q}{s,t,u,v}\cong \!^{H\o H^{*}}_{H\o H^{*}}\mathcal{YD}\binom{s,t}{p,q}.
\end{align*}
Furthermore, when $H$ is a BiHom-Hopf algebra with a bijective antipode, this isomorphism is braided.
\end{theorem}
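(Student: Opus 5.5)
The plan is to assemble the two preceding lemmas into an isomorphism of categories and then check compatibility with the monoidal and (pre)braided structures. Since both $F$ and $G$ are the identity on underlying spaces and on morphisms, the main work is to show that the two constructions of structure maps are mutually inverse. Concretely, I would show two things.

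\begin{enumerate}
\item[(a)] Starting from $M\in\mathcal{LR}(H)\binom{m,n,p,q}{s,t,u,v}$, forming the $H\o H^{*}$-action $(h\o f)\c m=f(m_{(1)})E^{-1}(h)\tr\om_M^{-1}(m_{(0)})$ and the coaction $\r$ of the first lemma, and then applying the formulas of the second lemma returns the original $\tr,\tl,\r_L,\r_R$. For instance, $(E(h)\o\v)\c m=\v(m_{(1)})h\tr\om_M^{-1}(m_{(0)})=h\tr m$ by the counit axiom. Similarly, $(E^{-1}\o\v^{*})(m_{<-1>})\o m_{<0>}$ collapses, via $\sum e^i(1)e_i=1$, to $m_{[-1]}\o\a_M^{-1}(m_{[0]})\tl 1=m_{[-1]}\o m_{[0]}$. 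The right coaction $(1\o e^i)\c\b_M^{-1}\om_M(m)\o\b\om^{-1}(e_i)$ becomes $m_{(0)}\o m_{(1)}$ after expanding the dual basis and using $1\tr x=\b_M(x)$. The right action is recovered by evaluating $\v\o\mathrm{id}$ on $m_{<-1>}$ against $E^{-1}\om\psi(h)$.
\item[(b)] Conversely, starting from a Yetter--Drinfel'd module over $H\o H^{*}$, one recovers its action and coaction. This uses that $H\o H^{*}$ is generated by $H\o\v$ and $1\o H^{*}$, so the action is determined by $(h\o f)=(\cdots h\o\v)(1\o\cdots f)$ up to the twisting maps. The coaction is determined by its two components, since $\D_{H\o H^{*}}$ is the tensor coproduct.
\end{enumerate}

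Next I would verify the monoidal structure. Both categories are strict, with unit $\B$, so it suffices to check that $F(M\o N)=F(M)\o F(N)$ as $H\o H^{*}$-Yetter--Drinfel'd modules. On the action side this reduces to the fact that $\D_{H\o H^{*}}$ is the tensor coproduct. Together with the formulas for $\r^{r}(m\o n)$ and $h\tr(m\o n)$ in $\mathcal{LR}(H)$, the pairing $f(\a^{-1}\om^{-1}(m_{(1)})\b^{-1}\psi^{-1}(n_{(1)}))$ splits as $f_1\o f_2$, by the definition of $\D_{H^{*}}$ in the example of Section~2. On the coaction side, the multiplication $\star$ in $H^{*}$ combined with the tensor product coaction gives the required formula. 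The $\r^{l}$-part follows from $\tl$ acting diagonally on $m\o n$.

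Finally comes the braiding, which I expect to be the main obstacle because of the bookkeeping of twisting exponents. The Yetter--Drinfel'd braiding on ${}^{H\o H^{*}}_{H\o H^{*}}\mathcal{YD}\binom{s,t}{p,q}$ has the form $m\o n\mapsto BC\b\psi(m_{<-1>})\c\b_N^{-1}\psi_N^{-1}(n)\o\a_M^{-1}\om_M^{-1}(m_{<0>})$, with the twisting maps extended to $H\o H^{*}$. I would substitute the coaction $m_{<-1>}\o m_{<0>}=E(m_{[-1]})\o e^i\o\a_M^{-1}(m_{[0]})\tl E\a^{-1}\om^{-1}(e_i)$ and the action formula. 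Then I would collapse the dual basis sum $e^i(\text{stuff})e_i$ into a single right action by $n_{(1)}$. The exponents of $E$ should then cancel to produce exactly $BC\b\psi(m_{[-1]})\tr\b_N^{-1}\psi_N^{-1}(n_{(0)})\o\a_M^{-1}\om_M^{-1}(m_{[0]})\tl AD\a\om(n_{(1)})$, which is $c_{M,N}$ of the theorem in Section~4. The delicate part is the ordering of twists when the $\tl$ and $\tr$ move past each other, and I would rely on the bimodule compatibility $\a(h)\tr(m\tl g)=(h\tr m)\tl\b(g)$ there. Once $F$ preserves the prebraiding, the Hopf case with bijective $S$ is automatic. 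Both prebraidings are then invertible by the theorem of Section~4 and its Yetter--Drinfel'd analogue, so $F$ preserves the braidings and the isomorphism is braided.
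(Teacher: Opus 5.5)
Your outline follows the paper's proof. $F$ and $G$ are identity functors, monoidality is checked directly, and the braidings are compared by substituting $m_{<-1>}\o m_{<0>}=E(m_{[-1]})\o e^{i}\o\a_M^{-1}(m_{[0]})\tl E\a^{-1}\om^{-1}(e_{i})$ and collapsing the dual basis. Your parts (a), (b) and the monoidal check are correct. They also supply what the paper leaves implicit, since it never verifies $GF=\mathrm{id}$ and $FG=\mathrm{id}$. In (b), the action is recovered from $h\o f=(\a^{-1}(h)\o\v)(1\o\b_{H^{*}}^{-1}(f))$ together with $\a(x)\c(y\c z)=(xy)\c\b_M(z)$. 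The coaction is recovered from BiHom-coassociativity and the counit of the tensor coproduct on $H\o H^{*}$.

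The step that fails is the braiding comparison, because the Yetter--Drinfel'd braiding you wrote has the wrong twists. Use the $F$-action $(h\o f)\c y=f(y_{(1)})E^{-1}(h)\tr\om_N^{-1}(y_{(0)})$ on $N$. Your formula then evaluates $e^{i}$ at $(BC)^{-1}\b^{-2}\psi^{-2}(n_{(1)})$ and collapses to
\[
BC\b\psi(m_{[-1]})\tr\b_N^{-1}\psi_N^{-1}\om_N^{-1}(n_{(0)})\o\a_M^{-2}\om_M^{-1}(m_{[0]})\tl AD(n_{(1)})
=\bigl((\mathrm{id}\o\a_M^{-1})\circ c_{M,N}\circ(\mathrm{id}\o\om_N^{-1})\bigr)(m\o n).
\]
This is not $c_{M,N}$: the $E$-exponents do cancel, but a stray $\om_N^{-1}$ and $\a_M^{-1}$ remain. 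The braiding the paper uses is
\[
m\o n\m BC\b\psi(m_{<-1>})\c\b_N^{-1}\psi_N^{-1}\om_N(n)\o\om_M^{-1}(m_{<0>}).
\]
This is exactly the form $c_{M,N}$ takes on objects with trivial right structures, namely $n\m\om_N(n)\o 1_H$ and $m\tl h=\v(h)\a_M(m)$. These are forced by $n_{(0)}\v(n_{(1)})=\om_N(n)$ and $x\tl 1_H=\a_M(x)$.

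With the paper's braiding, $\om_N$ cancels the $\om_N^{-1}$ built into the $F$-action, and $e^{i}$ is evaluated at $(BC)^{-1}\b^{-2}\psi^{-2}\om(n_{(1)})$. The exponent on the right action becomes $\om^{-1}E\a^{-1}\om^{-1}(BC)^{-1}\b^{-2}\psi^{-2}\om=AD\a\om$, which gives $c_{M,N}$ exactly. So the delicate bookkeeping comes from these unit and counit twists and the $\om^{-1}$ in the $F$-action. It does not come from the bimodule compatibility $\a(h)\tr(m\tl g)=(h\tr m)\tl\b(g)$, which the collapse never uses. With the corrected braiding, the rest of your argument goes through, including the Hopf case.
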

\begin{proof}
Easy to see that functor $F$ is monoidal. Let $A=\a^{m}\psi^{n},B=\b^{p}\om^{q},C=\a^{s}\psi^{t},D=\b^{u}\om^{v},E=\a^{m+s+2}\b^{p+u+2}\om^{q+v+2}\psi^{n+t+2}$, for all $M,N\in \mathcal{LR}(H)\binom{m,n,p,q}{s,t,u,v}$ and $m\in M, n\in N$,
\begin{align*}
&(BC\b \psi \o B_{H^{*}}C_{H^{*}}\b_{H^*}\psi_{H^*})(m_{<-1>})\c\beta_{N}^{-1}\psi_N^{-1}\om_{N}(n)\otimes \om_M ^{-1}(m_{<0>})
\\&=(EBC\b \psi (m_{[-1]}) \otimes B_{H^{*}}C_{H^{*}}\b_{H^*}\psi_{H^*}(e^i)) \c\beta_{N}^{-1}\psi_N^{-1}\om_{N}(n)\otimes \om_M ^{-1}[
\a_M ^{-1}(m_{[0]})\tl E\a^{-1}\om^{-1} (e_{i})]
\\&=(EBC\b \psi (m_{[-1]}) \otimes e^i) \c\beta_{N}^{-1}\om_{N}\psi_N^{-1}(n)\otimes \a_M^{-1}\om_M^{-1}(m_{[0]})\tl AD\a \b \psi (e_i)
\\&=(EBC\b \psi (m_{[-1]}) \otimes \v)\c [(1 \otimes e^i) \c \beta_{N}^{-2}\om_{N}\psi_N^{-1}(n)]\otimes \a_M^{-1}\om_M^{-1}(m_{[0]})\tl AD\a \b ^{2}\psi (e_i)
\\&=BC\b \psi (m_{[-1]})\tr [(1 \otimes e^i) \c \beta_{N}^{-2}\om_{N}\psi_N^{-1}(n)]\otimes \a_M^{-1}\om_M^{-1}(m_{[0]})\tl AD\a \b ^{2}\psi (e_i)
\\&=BC\b \psi (m_{[-1]})\tr \b_N^{-1}\psi_N^{-1}(n_{(0)})\otimes \a_M^{-1}\om_M^{-1}(m_{[0]})\tl AD\a \om (n_{(1)}).
\end{align*} 
Hence, $F$ is prebraided.

The proof is completed.
\end{proof}
\section*{Acknowledgements}The author are deeply indebted to the referee for his/her very  useful suggestions and some improvements to this paper. This work was partially supported by the Scientific Research Foundation of Nanjing Institute of Technology (No. YKJ202219) and the Natural Science Foundation of the Jiangsu Higher Education Institutions of China (No. 22KJB110019)

\end{document}